\numberwithin{equation}{subsection}
\newtheorem{theorem}{Theorem}
\newtheorem{lemma}[theorem]{Lemma}
\newtheorem{corollary}[theorem]{Corollary}
\newtheorem{remark}[theorem]{Remark}
\def\BState{\State\hskip-\ALG@thistlm}
\newcommand{\bs}[1]{\boldsymbol{#1}}
\newcommand{\bmu}{\bs{\mu}}
\newcommand{\bPhi}{\bs{\Phi}}
\newcommand{\brho}{\bs{\rho}}
\newcommand{\bx}{\bs{x}}
\newcommand{\bGamma}{\bs{\Gamma}}
\newcommand{\dx}[1]{\mathrm{d} #1}
\newcommand{\bbm}[1]{\mathbbm{#1}}
\newcommand{\N}{\mathbbm{N}}
\newcommand{\R}{\mathbbm{R}}
\newcommand{\E}{\mathbbm{E}}
\newcommand{\cF}{\mathcal{F}}
\newcommand{\cN}{\mathcal{N}}
\newcommand{\cS}{\mathcal{S}}
\newcommand{\truth}{^{\mathcal{N}}}
\newcommand{\bnu}{\bs{\nu}}
\DeclareMathOperator*{\argmax}{argmax}
\newcommand{\dul}[1]{\underline{\underline{#1}}}
\newcommand{\naive}{na\"{\i}ve}
\newcommand{\rev}[1]{{\leavevmode\color{black}{#1}}}
\newcommand\yc[1]{\textcolor{black}{#1}}
\newcommand\yanlai[1]{\textcolor{black}{#1}}
\newcommand{\TheTitle}{A goal-oriented RBM-Accelerated generalized polynomial chaos algorithm}
\newcommand{\TheAuthors}{J. Jiang, Y. Chen, and A. Narayan}
\newcommand{\headers}[2]{
\pagestyle{myheadings}
\markboth{\uppercase\expandafter{#2}}
{}}
\newcommand{\email}[1]{\protect\href{mailto:#1}{#1}}
\newcommand\funding[1]{\protect\\ \hspace*{1.8em}{\color{black}\bfseries Funding:} #1}
\headers{\TheTitle}{\TheAuthors}
\title{{\TheTitle}\thanks{Submitted to the editors on 9/12/2016.
\funding{This work was funded by the National Science Foundation (NSF) under awards DMS-1216928 and DMS-1552238, the Air Force Office of Scientific Research (AFOSR) under award FA9550-15-1-0467, and the Defense Advanced Research Projects Agency (DAPRA) under award N660011524053.}}}
\author{
  Jiahua Jiang\thanks{Mathematics Department, University of Massachusetts Dartmouth, North Dartmouth, MA (\email{jjiang@umassd.edu}, \email{yanlai.chen@umassd.edu}).}
  \and
  Yanlai Chen\footnotemark[2]
  \and
  Akil Narayan\thanks{Department of Mathematics, and Scientific Computing and Imaging Institute, University of Utah, Salt Lake City, UT (\email{akil@sci.utah.edu}).}
}
\begin{document}

\maketitle


\begin{abstract}

The non-intrusive generalized Polynomial Chaos (gPC) method is a popular computational approach for 
solving partial differential equations (PDEs) with random inputs. The main hurdle preventing its efficient direct application 
for high-dimensional input parameters is that the size of many parametric sampling meshes grows exponentially in the number of inputs (the ``curse of dimensionality'').  In this paper, we design a weighted version of the reduced basis method (RBM) for use in the non-intrusive gPC framework. \rev{We construct an RBM surrogate that can rigorously achieve a user-prescribed error tolerance, and ultimately is used to more efficiently compute a gPC approximation non-intrusively.} The algorithm is capable of speeding up traditional non-intrusive gPC methods by orders of magnitude without degrading accuracy, \rev{assuming that the solution manifold has low Kolmogorov width}. \rev{Numerical experiments on our test problems show that the relative efficiency improves as the parametric dimension increases}, demonstrating the potential of the method in delaying the curse of dimensionality. Theoretical results as well as numerical evidence justify these findings.
\end{abstract}



\section{Introduction}
Computational methods for stochastic problems in uncertainty quantification (UQ) are an increasingly-important area of research and much recent effort in this direction has been rewarded with many promising developments. In particular, algorithms that quantify the effect of (potentially) random input parameters on solutions to differential equations have seen rapid advancement. One of the most widely used methods in this context is the generalized Polynomial Chaos (gPC) method \cite{xiu_wiener--askey_2002}, which constructs a parametric response surface \rev{(with parameters modeling the randomness)} using a polynomial representation. This method exploits parametric regularity of the system to achieve fast convergence rates \cite{Db}. With gPC, stochastic solutions are represented as expansions in orthogonal polynomials of the input random parameters, and so many algorithms \rev{for parametric approximation} concentrate on computation of the expansion coefficients in a gPC representation. Collocation-based or pseudospectral-based methods are popular non-intrusive approaches to compute these coefficients, using a collection of interpolation or quadrature nodes in parameter space \cite{xiu_high-order_2005}. This requires one to query an expensive yet deterministic computational solver once for each parameter node. However, when the dimension of the random parameter is large, the size of many sampling meshes (and hence the number of computational solves) grows exponentially. This is a manifestation of the ``curse of dimensionality".

One popular strategy that combats the computational burden arising from multiple queries of an expensive model is model order reduction, which includes proper orthogonal decomposition (POD) methods, Krylov subspace methods, and reduced basis methods (RBM). The references \cite{benner_survey_2015,quarteroni_reduced_2014} detail some of these methods. Model reduction strategies allow one to replace an expensive computational model with an inexpensive yet accurate emulator for which performing a large number of queries \rev{may be} more computationally feasible.

Such an approach appeals to the same motivation as POD methods: 
\rev{although the discretized solution space is very high in dimension, the output of interest (such as the full solution field or integrated quantities of interest) frequently lie in a low-dimensional manifold. More precisely, the manifold can be approximated very well within a subspace of much lower dimension \cite{maday_priori_2002,buffa_priori_2012,PB}.} The search for, identification, and exploitation of this low-dimensional manifold are the central goals of many model order reduction strategies. Assuming such a low-dimensional manifold exists, then it may be possible to build a reduced-complexity emulator and consequently form the sought accurate gPC approximation in an efficient manner. In this paper we employ the RBM model reduction strategy for which \cite{RozzaHuynhPatera2008,PB,AT} are good references with \cite{AK,MG, Almroth_Stern_Brogan,Barrett_Reddien} the appropriate historical references. 

The Reduced Basis Method performs a projection onto \rev{a subspace spanned by} ``snapshots", i.e., a small and carefully chosen selection of the most representative high-fidelity solutions. 
\rev{The fundamental reason this is an accurate approach is that, for many PDE's of interest, the solution manifold induced by the parametric variation has small Kolmogorov width; see \cite{LorentzGolitschekMakovoz1996,pinkus_n-widths_1985} for a general discussion.}
These snapshots are selected via a \rev{weak} greedy algorithm that appeals to an \textit{a posteriori} error estimate {\cite{RozzaHuynhPatera2008, AT}}. The computational methods that one uses to compute high-fidelity snapshots include typical solvers, like spectral collocation or finite element discretizations. The ingredient in RBM that allows for computational savings is the ``offline-online" decomposition. The offline stage is the more expensive part of the algorithm where a small number \rev{(denoted $N$ throughout this paper)} of parameter values are chosen and the snapshots are generated by executing the expensive high-fidelity computational model at these parameter locations. 
The preparation completed during the offline stage allows very efficient evaluation of an emulator of the high-fidelity model at any other parameter value, i.e., ``online". During the online stage, each evaluation of the emulator can typically be computed 
\rev{orders of magnitude} 
faster than evaluation of the original expensive model. 
\rev{The actual speedup depends largely on how fast the Kolmogorov width decays and how optimally the weak greedy algorithm \cite{PB} is able to select parameter values.
Theoretical results on the quality of the $N$-dimensional surrogate space in approximating the full solution manifold are established in \cite{buffa_priori_2012} and later improved in \cite{PB}. Roughly speaking, polynomial and exponential decay rates of the theoretical Kolmogorov width of the solution manifold (with respect to $N$) can be achieved 
by the algorthmic RBM procedure. In practice, one achieves $2$ to $3$ orders of magnitude speedup \cite{RozzaHuynhPatera2008, Yc}.}
One of the major benefits of RBM that we exploit in this paper is that the RBM model reduction is \textit{rigorous}: Certifiable error bounds accompany construction of the emulator in the offline stage {\cite{AT}}.

The idea of utilizing RBM for problems in a general uncertainty quantification framework is not new \yanlai{\cite{HC,PC,PA,SB,BoyavalLeBrisEtc2009,HaasdonkUrbanWieland2013}}. In this paper our ultimate aim is to form a gPC expansion. The use of the RBM in this context, \rev{the design and analysis of an algorithm targeting statistical quantities of interest}, \yanlai{and the exploration of its effectiveness in high-dimensional random space are} underdeveloped \yanlai{to the best of our knowledge}. A \naive{} stochastic collocation or pseudospectral method is computationally challenging in high-dimensional parameter spaces, even when employing a sparse grid of economical cardinality. The hybrid gPC-RBM algorithm we propose is able to reduce the computational complexity required for construction of a gPC surrogate, and assists construction of the gPC approximation in high-dimensional parameter spaces with rigorous error bounds \rev{on the gPC coefficients, and on any Lipschitz continuous quantity of interest of the resulting expansion}.

This paper thus refines and extends the idea of combining a goal-oriented Reduced Basis Method with a generalized Polynomial Chaos expansion. Our framework is \textit{goal-oriented}: the construction of the approximation is optimized with a user-specifiable quantity of interest in mind. The algorithm is \textit{rigorous}: we can guarantee an error tolerance for general quantities of interest. Our numerical results indicate that the method improves in performance (efficiency) as the parametric dimension increases \rev{for the examples studied in this paper}. This suggests that our method is particularly useful for delaying the curse of dimensionality.

\rev{The remainder of the paper is structured as follows.} In section \ref{sec:background}, we introduce the general framework of a PDE with parametric input data. The two major ingredients in our approach, gPC and RBM, are likewise discussed. In section \ref{sec:hybrid} we introduce the novel contribution of this paper, the hybrid algorithm, \yanlai{which is analyzed in section \ref{sec:analysis}}. Our numerical results are collected in section \ref{sec:results}, and verify the efficiency and convergence of the hybrid algorithm.

\section{Background}
\label{sec:background}
In this section, we introduce the necessary background material of the hybrid algorithm, namely, generalized Polynomial Chaos and the Reduced Basis Method.  \\

\subsection{Problem setting}\label{sec:background:problem-setting}

Let $\boldsymbol{\mu} = (\mu_{1},...\mu_{K}) $ be a $K$-variate random vector with mutually independent components on a complete probability space. For $\Gamma_i = \mu_i(\Omega)$ the \rev{image} of $\mu_i$, we denote the probability density function of the random variable $\mu_{i}$ as $\rho_{i}: \Gamma_{i} \to \mathbb{R}^{+}$. Since the components of $\bmu$ are mutually independent, then $\boldsymbol{\rho}(\boldsymbol{\mu}) = \Pi_{i=1}^{K} \rho_{i}(\mu_{i})$ is the joint probability density function of random vector $\boldsymbol{\mu}.$ The image of $\boldsymbol{\mu}$ is $\boldsymbol{\Gamma} = \yc{\bigoplus}_{i=1}^{K} \Gamma_{i} \subset \mathbb{R}^{K}$.

Let $D \subset R^{d} \, (d = 1,2,3)$ be an open set in the physical domain with boundary $\partial D$, and $\boldsymbol{x} = (x_{1},...x_{d}) \in D$ \yc{a point in this set}. We consider the problem of finding the solution $u: D \times \boldsymbol{\Gamma} \to \mathbb{R}$ of the following PDE with random parameters:
\begin{align}\label{eq:pde}
  \begin{cases}\mathcal{L}(\boldsymbol{x},u,\boldsymbol{\mu}) = f(\boldsymbol{x},\boldsymbol{\mu}), \quad \forall \left(\bx, \bmu\right) \in D \times \Gamma, \\
    \mathcal{B}(\boldsymbol{x},u,\boldsymbol{\mu}) = g(\boldsymbol{x},\boldsymbol{\mu}), \quad \forall \left(\bx, \bmu\right) \in \partial D \times \Gamma.
\end{cases}
\end{align}
Here $\mathcal{L}$ is a differential operator defined on domain $D$ and $\mathcal{B}$ is a boundary operator defined on the boundary $\partial D$. The functions $f$ and $g$ represent the forcing term \yanlai{and} the boundary conditions, respectively. 

We require the problem \eqref{eq:pde} to \rev{be well-posed and have a} solution in a Hilbert space $X$. We thus assume $u(\cdot\,;\bmu) \in X$ for all $\bmu$. The Hilbert space $X$ is equipped with inner product $(\cdot,\cdot)_X$ and induced norm $\| \cdot \|_X$. A canonical example is when \eqref{eq:pde} corresponds to a linear elliptic partial differential equation \cite{Evans1998}: $X$ satisfies $H^1_0(D) \subset X \subset H^1(D)$, with $H^1$ the space of functions whose \rev{first} derivatives are square-integrable over $D$, and $H^1_0$ the space of functions in $H^1$ whose support is compact in $D$.

In most applications, one has access to a deterministic computational solver that, for each \textit{fixed} value of $\bmu$, produces an approximate, discrete solution to \eqref{eq:pde}. We assume that for this fixed $\bmu$, such a computational solver produces the discrete solution $u^{\mathcal{N}}$, which has $\mathcal{N}$ degrees of freedom. This discrete solution is obtained by solving a discretized version of \eqref{eq:pde}. For a fixed $\bmu \in \bGamma$, this is given by
\begin{align}\label{eq:truth-pde}
  \begin{cases}\mathcal{L}^{\mathcal{N}} (u^{\mathcal{N}},\boldsymbol{\mu}) = f^{\mathcal{N}}(\boldsymbol{\mu}), \\ 
  \mathcal{B}^{\mathcal{N}}(u^{\mathcal{N}},\boldsymbol{\mu}) = g^{\mathcal{N}}(\boldsymbol{\mu}).
\end{cases}
\end{align}
Standard discretizations, such as finite element or spectral collocation solvers, can be written in this way. The continuous Hilbert space $X$ is replaced with its discrete Hilbert space counterpart $X_{\mathcal{N}}$, with norm $\|\cdot\|_{X_{\mathcal{N}}}$. 

As before, we assume that $u^{\mathcal{N}}(\bmu) \in X_{\mathcal{N}}$. We will need an additional assumption that the norm of the solution is uniformly bounded as a function of the parameter. I.e., that
\begin{align}\label{eq:uniform-boundedness}
  \left\| u^{\mathcal{N}}(\cdot, \bmu) \right\|_{X_\mathcal{N}} &\leq U, & \forall \bmu &\in \bGamma
\end{align}
This assumption is satisfied for many practical problems of interest.
\yanlai{For example, for a linear elliptic operator $\mathcal{L}^{\mathcal{N}} (u^{\mathcal{N}})$, boundedness of the solution is a simple consequence of the bilinear weak form being coercive and the linear form being continuous \cite{JohnsonFEM}. In our setting, uniform coercivity of the bilinear form and uniform continuity of the linear form with respect to $\mu$ would be sufficient to guarantee the uniform boundedness \eqref{eq:uniform-boundedness}.} 

When introducing the discretized PDE \eqref{eq:truth-pde} we assume that, for any $\bmu \in \bGamma$, $u^{\mathcal{N}}(\bmu) \approx u(\bx,\bmu)$, where the approximation has an acceptable level of accuracy as determined by the modeling scenario. In practical modeling situations, one frequently requires a large number of degrees of freedom, $\mathcal{N} \gg 1$, to achieve this.

In what follows we will usually treat $\bmu$ as a parameter associated with a $\brho$-weighted norm, rather than as an explicitly random quantity. This is a standard approach, and is without loss since all of our statements can be framed in the language of probability by appropriate change of notation. \rev{For example, the space of functions of $\bmu$ with finite second moment is equivalent to the space $L^2_{\boldsymbol{\rho}}\left(\bGamma\right)$,
\begin{align*}
  L^2_{\brho}\left(\Gamma\right) &= \left\{ u: \Gamma \rightarrow \R\; | \; \int_\Gamma u^2(y) \brho(y) \dx{y} < \infty \right\} \\
                                 &= \left\{ u: \Gamma \rightarrow \R\; | \; \E u^2\left(\bmu\right) < \infty \right\}.
\end{align*}
}

\subsection{Generalized Polynomial Chaos}
The Generalized Polynomial Chaos method is a popular technique for solving stochastic PDE and representing stochastic processes \cite{Db}. The main idea of the gPC method is to seek an approximation of the exact solution of the PDE \eqref{eq:pde} by assuming that the dependence on $\bmu$ is accurately represented by a finite-degree $\bmu$-polynomial. If $u$ depends smoothly on $\bmu$, then exponential convergence with respect to the polynomial degree can be achieved. Computational implementations of gPC use an expansion in an orthogonal polynomial basis; as a consequence, quantities of interest such as expected value and variance can be efficiently evaluated directly from expansion coefficients.

\subsubsection{gPC basis}
Consider one-dimensional parameter space $\Gamma_{i}$ corresponding to the random variable $\mu_i$. If $\mu_i$ has finite moments of all orders, then there exists a collection of orthonormal polynomials $\left\{\phi^{(i)}_m(\cdot)\right\}_{m=0}^\infty$, with $\phi_m$ a polynomial of degree $m$, such that
\begin{align*}
  \E \left[ \phi^{(i)}_m(\mu_i) \phi^{(i)}_n(\mu_i) \right] = \int_{\Gamma_{i}}\rho_{i}(\mu_{i})\phi^{(i)}_{m}(\mu_{i})\phi^{(i)}_{n}\yc{(\mu_i)}d\mu_{i} = \delta_{m,n}
\end{align*}
where $\delta_{m,n}$ is the Kronecker delta function. 
The type of orthogonal polynomial basis $\{ \phi^{(i)}_{m}\}$ depends on the distribution of $\mu_i$. For instance, if $\mu_{i}$ is uniformly distributed in $[-1,1]$, its probability density function $\rho_i$ is a constant and $\{ \phi^{(i)}_{m}\}_{m=0}^\infty$ is the set of orthonormal Legendre polynomials. Several well-studied orthogonal polynomial families correspond to standard probability distributions \cite{xiu_wiener--askey_2002}. 

%

For the $K$-dimensional case ($K>1$), an orthonormal polynomial family associated to the full \rev{joint} density $\rho(\bmu)$ can be formed from products of univariate \rev{orthonormal} polynomials:
\begin{align*}
  \Phi_{\alpha}(\boldsymbol{\mu}) = \phi^{\rev{(1)}}_{\alpha_{1}}(\mu_{1})...\phi^{\rev{(K)}}_{\alpha_{K}}(\mu_{K}), 
\end{align*}
where $\alpha = (\alpha_1, \ldots, \alpha_K) \in \N_0^K$ is a multi-index. The degree of $\Phi_\alpha$ is $|\alpha| = \displaystyle {\sum_{k=1}^K \alpha_k}$. 
A standard polynomial space to consider in the multivariate setting is the \textit{total degree} space, formed from the span of all $\Phi_\alpha$ whose degree is less than a given $P \in \N$:
\begin{align}\label{eq:total-degree-definition}
  \mathcal{U}_{K}^{P} \equiv \mathrm{span} \left\{ \Phi_\alpha \;\; \big| \;\; |\alpha| \leq P \right\}
\end{align}
The dimension of \yc{$\mathcal{U}_K^P$}, \rev{denoted by $M$}, is
\begin{equation}\label{eq:total-degree-dimension}
M \equiv \mathrm{dim}(\mathcal{U}_{K}^{P}) = \left( \begin{array}{c}K+P \\ K \end{array} \right),
\end{equation}
which grows comparably to $P^K$ for large $K$. In what follows, we will index multivariate orthonormal polynomials as either $\Phi_\alpha$ with $\alpha \in \N_0^K$ satisfying $|\alpha| \leq P$, or $\Phi_m$ with $m \in \N$ satisfying $1 \leq m \leq \dim \mathcal{U}_K^P$. To achieve this, we assume any ordering of multi-indices $\alpha$ that preserves a partial ordering of the total degree (for example, graded lexicographic ordering).

\subsubsection{gPC approximation and quadrature}\label{sec:gpc-approximation}
The $L^2_\rho(\Gamma)$-optimal gPC approximation of the solution $u(\boldsymbol{x},\boldsymbol{\mu})$ to \eqref{eq:pde} in the space $\mathcal{U}^P_K$ is the $L^2_\rho(\Gamma)$-orthogonal projection onto $\mathcal{U}_{K}^{P}$, given by
\begin{equation}
\label{eq:eq1}
u_{K}^{P}(\boldsymbol{x},\boldsymbol{\mu})  =\sum_{m = 1}^{M} \widetilde{u}_{m}(\boldsymbol{x})\Phi_{m}(\boldsymbol{\mu}).
\end{equation}
The Fourier coefficient functions $\widetilde{u}_{m}$ are defined as 
\begin{align}\label{eq:uhat-definition}
\widetilde{u}_{m}(\boldsymbol{x}) = \int u(\boldsymbol{x},\boldsymbol{\mu}) \Phi_{m}(\boldsymbol{\mu})\boldsymbol{\rho}(\boldsymbol{\mu})d\boldsymbol{\mu}. 
\end{align}
For any $\bx \in D$, the mean-square error in this finite-order projection is
\begin{alignat}{1}
\label{eq:define-rho-norm}
\mathcal{E}_{\mathrm{gPC}}(\boldsymbol{x}) & = \left\|u(\boldsymbol{x},\boldsymbol{\mu}) - u^P_{K}(\boldsymbol{x},\boldsymbol{\mu}) \right\|_{L^2_{\boldsymbol{\rho}}(\boldsymbol{\Gamma})}\nonumber\\
& = \left(\int_{\boldsymbol{\Gamma}}(u(\boldsymbol{x},\boldsymbol{\mu})-u_{K}^{P}(\boldsymbol{x},\boldsymbol{\mu}))^2\boldsymbol{\rho}(\boldsymbol{\mu})d\boldsymbol{\mu}\right)^{1/2}.
\end{alignat}
Note that this error is usually not achievable in practice: The Fourier coefficients $\widetilde{u}_m$ cannot be computed without essentially full knowledge of the solution $u$. Therefore, one frequently resorts to approximating these coefficients. One popular non-intrusive method is quadrature-based pseudospectral approximation, where the integral in \eqref{eq:uhat-definition} is approximated by a quadrature rule. 

\yc{Toward that end,} let $\left\{ \bmu^q, w_q \right\}_{q=1}^Q$ denote quadrature nodes and weights, respectively, for a quadrature rule that implicitly defines a new empirical \yc{probability} measure:
\begin{align}\label{eq:dense-quadrature-rule}
  \int_{\bGamma} f(\bmu) \rho(\bmu) \dx{\bmu} \approx \sum_{q=1}^Q w_q f\left(\bmu^q\right).
\end{align}
For example, two common choices for quadrature rules are tensor-product Gauss quadrature rules, and Gauss-Patterson-based sparse grid quadrature rules (e.g., \cite{conrad_adaptive_2013}). Each of these rules can integrate polynomials of high degree, but the requisite size of the quadrature rule $Q$ is large in high dimensions. \rev{Both quadrature grids have cardinality that grows exponentially with dimension (for a fixed degree of integration accuracy), although sparse grids have a smaller size among the two. 
In this paper, we use tensor-product Gauss quadrature rules for low dimensions and sparse grid constructions for high dimensions; with these choices, we hereafter assume that \eqref{eq:dense-quadrature-rule} holds for functions $f$ of the form of the integrand in \eqref{eq:uhat-definition}.}


With this quadrature rule, the Fourier coefficients can be approximated by 
\begin{align}\label{eq:uhat-approximation}
  \widetilde{u}_m \approx \widehat{u}_m = \sum_{q=1}^Q u\left(\bx, \bmu^q\right) \Phi_m(\bmu^q) w_q.
\end{align}
The advantage of this formulation is that we need only compute the quantities $u\left(\cdot, \bmu^{\yc{q}}\right)$, which are a collection of solutions to a deterministic PDE. Since this is all done in the context of a computational solver given by \eqref{eq:truth-pde}, one will replace the continuous solution $u(\cdot, \bmu)$ with the discrete solution $u^{\mathcal{N}}(\bmu)$.

Then a straightforward stochastic quadrature approach \yc{first} collects the solution ensemble from the computational solver, and subsequently uses it to compute approximate Fourier coefficients:
\begin{align}\label{eq:uhat-truth-approximation}
  \left\{ u_q^{\mathcal{N}}\left( \bx \right) \right\}_{q=1}^Q \coloneqq \left\{ u^{\mathcal{N}}\left( \bx, \bmu^q\right)\right\}_{q=1}^Q \hskip 10pt \longrightarrow \hskip 10pt
  \widehat{u}^{\mathcal{N}}_m = \yc{\sum_{q=1}^Q u_q^{\mathcal{N}} \left(\bx \right) \Phi_m(\bmu^q) w_q,} 
\end{align}
The full approximation is formed by replacing the exact Fourier coefficents $\widetilde{u}_m$ in \eqref{eq:eq1} with the $\widehat{u}^{\cN}_m$ coefficients computed above.

Note that, in order for the quadrature approximation \eqref{eq:uhat-truth-approximation} to be reasonably accurate, the number of quadrature points $Q$ should be comparable with $M$. We already know from \eqref{eq:total-degree-dimension} that $M$ scales like $P^K$ for total-degree spaces. 
Typically, the cost of obtaining each \yanlai{$u^{\mathcal{N}}_q$} requires at least $\mathcal{O}(\mathcal{N})$ computational effort. (In some cases $\mathcal{O}(\mathcal{N}^3)$ effort is required.) \yc{$Q$} solves of the PDE are required, with each solve costing at least ${\mathcal{O}}(\mathcal{N})$ work. Since $Q \sim M \sim P^K$, then in the best-case scenario the total work scales like $\mathcal{O}\left(\mathcal{N} P^K \right)$. Thus, the requisite computational effort for a straightforward stochastic quadrature method is onerous when \rev{$K$, the dimension of the random parameter $\bmu$, is large}.

However, if these coefficients could be computed, then the resulting expansion \eqref{eq:eq1} can be very accurate. The focus of this paper is to \textit{inexpensively} compute an approximation to $\widehat{u}^{\cN}_m$. The essential ingredient is replacement of $u^{\mathcal{N}}_{\yc{q}}$ by a surrogate that is much cheaper to compute, and whose approximation fidelity can be rigorously quantified.


\subsubsection{Quantities of Interest}

In many UQ scenarios, one is not necessarily interested in the entire solution field $u(\bx,\bmu)$, but rather some other quantity of interest derived from it. We introduce a functional $\mathcal{F}$ that serves to map the solution $u$ to the quantity of interest (the ``goal"). 
Our construction exploits the well-known property of gPC that common quantities of interest such \yanlai{as} the mean field and variance field can be recovered by simple manipulation of the gPC coefficients \cite{Db}, up to the accuracy of the gPC expansion.
Our theoretical results require two assumptions on the quantity of interest map $\mathcal{F}$.

The first assumption we make is that $\mathcal{F}$ has affine dependence on an $M$-term gPC expansion, $u_M$, specifically
\begin{align}
\label{eq:F-affine}
  \mathcal{F} \left[ u_M \right] = \mathcal{F} \left[ \sum_{m=1}^M \widehat{u}_m(\bx) \phi_m(\bmu) \right] = \sum_{m=1}^M \theta_{\mathcal{F}}(\widehat{u}_m\left(\bx\right)) \mathcal{F} \left[ \phi_m(\bmu)\right],
\end{align}
for some function $\theta_{\mathcal{F}}$. It is not hard to show that typical quantities of interest satisfy this condition on $\mathcal{F}$ with simple coefficient functions $\theta_{\mathcal{F}}$:
\begin{subequations}\label{eq:F-examples}
\begin{itemize}
\item $\mathcal{F}$ is the expected value operator $\E$ and $\theta_{\mathcal{F}}$ is the identity function, 
\begin{align}
\label{eq:mean-value}
\mathcal{F}\left[ u_M \right] &= \E \left[ u_M(\bx, \bmu) \right] = \sum_{m=1}^M \widehat{u}_m(\bx) \E \left[ \phi_m(\bmu) \right] = \widehat{u}_{1}\rev{(x)}
\end{align}
\item $\mathcal{F}$ is the variance or norm-squared operator and $\theta_{\mathcal{F}}$ is the quadratic function $\theta_{\mathcal{F}}(v) = v^2$,
\begin{align}
\label{eq:variance}
 \mathcal{F}\left[ u_M \right] &= \mathrm{var} (u_M(\bx, \bmu) )= \sum_{m=1}^M (\widehat{u}_m(\bx))^2 \mathrm{var} \left[ \phi_m(\bmu) \right] = \sum_{m=2}^M (\widehat{u}_m(\bx))^2\\
 \label{eq:rho-norm}
 \mathcal{F}\left[ u_M \right] &= ||u_M(\bx, \bmu)||^2_{L_{\rho}^2} = \sum_{m=1}^M (\widehat{u}_m(\bx))^2  || \phi_m(\bmu) ||_{L_{\rho}^2}= \sum_{m=1}^M (\widehat{u}_m(\bx))^2.
\end{align}
\end{itemize}
\end{subequations}

Our theoretical results also require a second assumption: that the functional $\theta_{\mathcal{F}}$ is Lipschitz continuous with Lipschitz constant $C_{\mathrm{Lip}}$, i.e., that
\begin{align}\label{eq:F-lipschitz}
  \left\| \theta_{\cF}(v) - \theta_{\cF}(w) \right\| \leq C_{\mathrm{Lip}} \left\| v - w \right\|,
\end{align}
for all appropriate inputs $v$ and $w$. For $\mathcal{F} = \E$, this constant is $C_{\mathrm{Lip}} = 1$. For the latter cases of $\mathcal{F} = \mathrm{var}$ and $\mathcal{F}\left[ \cdot\right] = \left\| \cdot \right\|_{L^2_\rho}$ where $\theta_{\mathcal{F}}(v) = v^2$, then $C_{\mathrm{Lip}} = 2 U$ with $U$ is the uniform bound in \eqref{eq:uniform-boundedness}.  \rev{This is a consequence of $\left\| \theta_{\cF}(v) - \theta_{\cF}(w) \right\| = \left\| v^2 - w^2 \right \| \le \left\| v + w \right\| \cdot \left\| v - w \right\|$ and the uniform-boundedness of the solution \eqref{eq:uniform-boundedness}}.

\subsection{Reduced Basis Method (RBM)}\label{sec:background-rbm}
The reduced basis method \cite{RozzaHuynhPatera2008,PB,AT} is a popular model order reduction \yc{strategy} to solve a parameterized PDE for a large number of different parameter configurations. RBM seeks to form an approximation $u^N$ satisfying
\begin{align*}
  u^N(\bmu) &\approx u^{\mathcal{N}}(\bmu), & \bmu &\in \bGamma,
\end{align*}
such that the approximation $u^N$ can be computed with an algorithm whose complexity depends only on $N$, in contrast to the full solution $u^{\mathcal{N}}$ whose complexity depends on $\mathcal{N} \gg N$. 
\rev{To achieve this speedup, RBM algorithms traditionally make 3 assumptions:
  \begin{itemize}
    \item The Kolmogorov $N$-width of the (discretized) solution manifold is small, i.e.,
      \begin{align}\label{eq:n-width}
      d_N \coloneqq \inf_{\dim Y = N} \sup_{\bs{\mu} \in \bs{\Gamma}} \inf_{v \in Y} \left\| u^{\cN}\left(\cdot, \bs{\mu}\right) - v \right\|_{X^{\cN}}
      \end{align}
      is small when $N \ll \cN$, where the outer infinimum is taken over all $N$-dimensional subspaces of $X^{\cN}$. In practice one hopes that $d_N$ decays algebraically or even exponentially with increasing $N$. The small $N$-width requirement is a fundamental mathematical assumption without which RBM cannot achieve acceptable error with small $N$. RBM forms a dimension-$N$ space $X^N$ that seeks to achieve a distance to the solution manifold that is close to $d_N$.
    \item There is an efficiently-computable rigorous \textit{a posteriori} error estimate. Given the $N$-dimensional RBM subspace $X^N$ of $X^{\cN}$, an estimate $\Delta_N(\bs{\mu})$ can be computed with $\mathcal{O}(N)$ complexity, satisfying
      \begin{align}\label{eq:error-estimate}
         \Delta_N \left(\bs{\mu}\right) &\geq  \left\| u^{\cN}\left(\cdot, \bs{\mu}\right) - u^N(\mu)\right\|_{X^{\cN}}, & \bs{\mu} &\in \bs{\Gamma}
      \end{align}
      The computable error estimate is required for an iterated, greedy construction of RBM approximation spaces $X^N$.
    \item The operators $\mathcal{L}$ and $f$ in \eqref{eq:pde} have affine dependence on $\bs{\mu}$. I.e., there exist $\bs{\mu}$-independent operators $\bbm{L}_q$ and $f_q$, and $\bs{x}$-independent functions $\theta^L_q(\bs{\mu})$ and $\theta^f_q(\bs{\mu})$ such that 
      \begin{align}\label{eq:affine-assumptions}
          \mathcal{L}(\boldsymbol{\mu}) &= \sum_{q = 1}^{Q_{L}}\boldsymbol{\theta}_{q}^{L}(\boldsymbol{\mu})\mathbb{L}_{q}, &
          f(\boldsymbol{x},\boldsymbol{\mu}) &= \sum_{q = 1}^{Q_{f}}\boldsymbol{\theta}_{q}^{f}(\boldsymbol{\mu})f_{q}(\boldsymbol{x}) 
      \end{align}
      The affine assumptions are required so that RBM can compute the reduced-order solution $u^N$ with $N \ll \mathcal{N}$ complexity.
  \end{itemize}
  In this paper, we will also assume that $\mathcal{L}$ is a linear operator: this simplifies presentation of some RBM mechanics and allows easy motivation of formulas connecting PDE residuals with solution errors. While these assumptions are traditional, there are constructive remedies for non-affine and \rev{certain} non-linear operators {\cite{MB, MA, NC}}. 
}
The RBM algorithm is a central part of the novel hybrid approach that we present in Section \ref{sec:hybrid}.

\subsubsection{Reduced basis approximation}

We recall from the discussion in Section \ref{sec:background:problem-setting} that a computational solver in \eqref{eq:truth-pde} uses $\mathcal{N} \gg 1$ degrees of freedom to produce $u^{\mathcal{N}}_{\yc{q}}(\bx)$, which is deemed an acceptably accurate approximation to $u\left(\bx, \bmu^{\yc{q}}\right)$. In the RBM context, this approximation is called the \textit{truth solution} or \textit{\yc{truth approximation}} and we will use this terminology when appropriate. 
The starting point for developing computational reduced basis methods is to replace the expensive truth solution with an inexpensive reduced-order solution. We briefly describe the standard method for accomplishing this below. 

Assume that a training set of parameter samples $\Xi \in \bGamma$ is given such that the $\bmu$-variation of the solution $u$ is accurately captured by the resulting truth solution ensemble \yc{\{$u^{\mathcal{N}}(\bx, \bmu): \bmu \in \Xi$\}}. \footnote{More precisely, we require that manifold of the finite ensemble of solutions $u^{\mathcal{N}}\left(\cdot, \Xi\right)$ is an accurate surrogate for the full manifold $u^{\mathcal{N}}\left(\cdot, \bGamma\right)$.}
\rev{This is a rather stringent requirement on $\Xi$, and does not furnish a constructive definition in general. }
In this paper we take $\Xi$ to be the quadrature rule nodal set introduced in \eqref{eq:dense-quadrature-rule}, \yc{that is, $\Xi = \left\{ \bmu^q \right\}_{q=1}^Q$}. \rev{Since our ultimate goal is formation of a gPC surrogate, we argue that such a choice of $\Xi$ is a reasonable choice for the RBM training set.}

For any given reduced-order dimension $N \ll \mathcal{N}$, we build the $N$-dimensional reduced basis space $X^{N}$ by a greedy algorithm. This space is constructed as a span of ``snapshots" (i.e., truth solutions) \cite{Maday}
\begin{align}\label{eq:XN-space}
  X^{N} = \mathrm{span} \{ u^{\mathcal{N}}(\boldsymbol{x}, \bnu^1), ..., u^{\mathcal{N}}(\boldsymbol{x}, \bnu^{N})\},
\end{align}
where $\left\{\bnu^1, \ldots, \bnu^N\right\} \subset \Xi$ are chosen in a greedy fashion. Assuming the greedy algorithm is performed in a sufficiently accurate fashion, then the dimension-$N$ space approximates the manifold $u\left(\cdot, \bs{\Gamma}\right)$ with an error comparable to the $N$-width \cite{PB}. The small $N$-width assumption \eqref{eq:n-width} thus guarantees that a greedily-constructed $X^N$ achieves a small approximation error. Existence of an efficiently-computable error estimate satisfying \eqref{eq:error-estimate} allows a feasible greedy search for the $\bs{\nu}^k$ to be performed. (See sections \ref{sec:rbm-error-estimate} and \ref{sec:rbm-greedy} below.)

\yc{For a fixed $\bmu^\ast \in \bGamma$, RBM approximates the truth solution $u^{\mathcal N}(\bx, \bmu^\ast)$ by an element from $X^N$.} Thus, the RBM surrogate $u^{N}(\boldsymbol{x},\boldsymbol{\mu}^{*})$ can be represented as
\begin{align}\label{eq:uN-ansatz}
u^{N}(\boldsymbol{x},\boldsymbol{\mu}^{*}) = \sum_{k = 1}^{N} c_{k}(\boldsymbol{\mu}^{*})u^{\mathcal{N}}(\boldsymbol{x},\boldsymbol{\mu}^{k})
\end{align}
RBM algorithms proceed by computing the $c_k\left(\bmu^{*}\right)$ so that the PDE residual with the ansatz \eqref{eq:uN-ansatz} is as small as possible. The meaning of ``small" is made precise by the prescription of an appropriate projection operator $\mathcal{P}$ such that, using linearity of operator, the following holds:
\begin{equation}\label{eq:rbm-projection}
  \mathcal{P} \left[\sum_{k = 1}^{N}c_{k}(\boldsymbol{\mu}^{*})\mathcal{L}^{\mathcal{N}}(\boldsymbol{x}, u^{\mathcal{N}}(\boldsymbol{x},\boldsymbol{\mu}^{k}), \boldsymbol{\mu}^{*}) \right] = \mathcal{P} \left[ f^{\mathcal{N}}(\boldsymbol{x},\boldsymbol{\mu}^{*}) \right].
\end{equation}
Concrete examples of this abstract projection operator are the continuous $L^2$ projection onto $X^N$, a discrete $\ell^2$ projection (least-squares) on the spatial mesh, or an empirical interpolation procedure \cite{Yc,ChenGottliebMaday}. The affine assumption \eqref{eq:affine-assumptions} allows computation of the $c_k\left(\bmu^*\right)$ with an $N$-dependent complexity (as opposed to $\cN \gg N$ complexity): Using \eqref{eq:affine-assumptions} in \eqref{eq:rbm-projection}, we have
\begin{align}\label{eq:rbm-residual-projection}
  \mathcal{P} \left[ \sum_{k = 1}^{N} c_{k}(\boldsymbol{\mu}^{*}) \sum_{q = 1}^{Q_{L}} \boldsymbol{\theta}_{q}^{L}(\boldsymbol{\mu}^{*}) \dul{\mathbb{L}_{q}(u^{\mathcal{N}}(\boldsymbol{x},\boldsymbol{\mu}^{k}))} \right]
  = \mathcal{P} \left[ \sum_{q = 1}^{Q_{f}} \boldsymbol{\theta}_{q}^{f}(\boldsymbol{\mu}^{*}) \dul{f_{q}(\boldsymbol{x})}\right]
\end{align}
We note that only the terms that are double-underlined require $\mathcal{N}$-dependent complexity to evaluate. However, these terms do \textit{not} depend on $\bmu^\ast$, and so they may be computed and stored during the offline stage. We refer to \cite{Yc, GR} for more details.

In the following sections we detail portions of the RBM algorithm that our novel algorithm amends: the error estimate and the greedy algorithm (sections \ref{sec:rbm-error-estimate} and \ref{sec:rbm-greedy}, respectively).

\subsubsection{\textit{A posteriori} error estimate}\label{sec:rbm-error-estimate}
\rev{The goal of this section is to compute a bound on the norm of $e(\boldsymbol{x},\boldsymbol{\mu}) \coloneqq u^{\mathcal{N}}(\boldsymbol{x},\boldsymbol{\mu}) -u^{N}(\boldsymbol{x},\boldsymbol{\mu})$ without computing the truth solution $u^{\mathcal{N}}$. Let $R_N : D \times \bs{\Gamma}$ denote the (Riesz representation of the) truth discretization residual from \eqref{eq:truth-pde} using the reduced-order solution from $X_N$:
  \begin{align}\label{eq:error-equation}
    R_N \left(\bx, \bmu\right) \coloneqq f^{\mathcal{N}}(\cdot\,;\boldsymbol{\mu}) - \mathcal{L}^{\mathcal{N}}(u^{N}(\boldsymbol{x}, \boldsymbol{\mu});\boldsymbol{\mu}) = \mathcal{L}^{\mathcal{N}}(e(\boldsymbol{x},\boldsymbol{\mu}),\boldsymbol{\mu})
  \end{align}
}
\rev{Let $\beta_{LB}(\boldsymbol{\mu})$ be a lower bound for the smallest eigenvalue of $\mathcal{L}^{\mathcal{N}}(\boldsymbol{\mu})^{T}\mathcal{L}^{\mathcal{N}}(\boldsymbol{\mu})$:
\begin{equation}\label{eq:beta-lb}
  0 < \beta_{LB}(\boldsymbol{\mu}) \leq \min_{v} \yc{\frac{v^{T}\left(\mathcal{L}^{\mathcal{N}}(\boldsymbol{\mu})\right)^{T} \mathcal{L}^{\mathcal{N}}(\boldsymbol{\mu})v}{\lVert v \rVert_{X_{\mathcal N}}}} 
\end{equation}
}
\yc{Above, $\mathcal{L}^{\mathcal{N}}(\boldsymbol{\mu})$ should be understood as the matrix} representation of the operator ${\mathcal L}^{\mathcal N}(\cdot\,; \bmu)$. The relations \eqref{eq:error-equation} and \eqref{eq:beta-lb} can be used to conclude \cite{Yc} 
\begin{equation} 
\yc{\Delta_{N}(\boldsymbol{\mu}) \coloneqq \frac{{\lVert R_N(\boldsymbol{x}, \boldsymbol{\mu})\rVert_{X_{\mathcal N}}}}{\sqrt{\beta_{LB}(\boldsymbol{\mu})}} \geq \lVert e(\boldsymbol{x},\boldsymbol{\mu}) \rVert_{X_{\mathcal N}}  \qquad \forall \boldsymbol{\mu} \in \Xi}
\end{equation}
\rev{Thus the \textit{a posteriori} error estimate $\Delta_N$ is rigorous, satisfying \eqref{eq:error-estimate}. For the computation of this bound to be efficient, we must compute the residual $R_N$ and $\beta_{LB}$ in an $\mathcal{N}$-independent fashion. 
  The residual can be computed with $\mathcal{O}(N)$ complexity by exploiting the same manipulations used in \eqref{eq:rbm-residual-projection}. The efficient evaluation of $\beta_{LB}(\boldsymbol{\mu})$ can be accomplished via the successive constraint linear optimization method (SCM) {\cite{RozzaHuynhPatera2008,HuynhSCM, HKCHP, Chen2015_NNSCM}} with \yc{the marginal} computational cost \yc{for each $\bmu$} independent of the truth solution complexity $\mathcal{N}$. (There is a one-time $\cN$-dependent overhead computation.) 
  
  With the ability to efficiently compute $\Delta_N$, we can describe the greedy algorithm for choosing the RBM parameter snapshot locations \yc{$\{\bnu^k\}$}. 
}

\subsubsection{Greedy Algorithm}\label{sec:rbm-greedy}
\rev{Given a current set of parametric samples $\left\{ \bs{\nu}^1, \ldots, \bs{\nu}^k\right\}$ and the training parameter set $\Xi$, a new parameter value $\boldsymbol{\nu}^{\yc{k+1}} \in \Xi$ is ideally selected as the $\bmu \in \Xi$ that maximizes the error between $u^{\cal{N}}\left(\cdot,\bs{\mu}\right)$ and its projection onto $X^k$. Computationally, this error may be estimated by $\Delta_k$:}
\begin{equation}\label{eq:greedy-selection}
\boldsymbol{\nu}^{k+1} = \argmax_{\boldsymbol{\mu} \in \Xi} \Delta_{k}(\boldsymbol{\mu})
\end{equation}
This process is repeated either until the maximum value of $\Delta_N$ is \rev{smaller than a user-prescribed tolerance $\epsilon_\mathrm{tol}$, or until $N$ reaches a user-defined maximum value.} 

\rev{In total, only $N$ queries of the truth solution are required. (One query for each parameter selected via \eqref{eq:greedy-selection}.) However, each optimization \eqref{eq:greedy-selection} requires sweeping over $\Xi$, evaluating the error estimate $\Delta_k$ everywhere in the training set. Although we have described above that evaluation of $\Delta_k$ has a complexity that is only $N$-dependent, the cardinality of $\Xi$ can be very large, and so this greedy optimization can still be expensive for large parametric dimensions.}

\rev{We close this section by remarking that there are several algorithmic optimizations to speed up the computation in \eqref{eq:greedy-selection}. For example, the evaluation of $\Delta_k(\bmu)$ for each $\bmu \in \Xi$ is embarrassingly parallel. In addition, $\Delta_k$ is monotonically decreasing with $k$ if the projection operator $\mathcal{P}$ in \eqref{eq:rbm-residual-projection} is an orthogonal projection. In this case, any $\bmu \in \Xi$ satisfying $\Delta_k(\bmu) < \epsilon_{\mathrm{tol}}$ may be permanently removed from future greedy sweeps.}

\section{Hybrid Algorithm}\label{sec:hybrid}
Recall that straightforward stochastic quadrature requiring $Q$ solves of \eqref{eq:truth-pde}, each having $\mathcal{N}$-dependent complexity, can be computationally burdensome when the random parameter dimension $K$ is large.  
Our approach simply replaces $u^{\cN}$ in \eqref{eq:uhat-truth-approximation} with an RBM surrogate $u^N$ that is tailored toward gPC approximations. It thus ameliorates the cost-per-solve (reducing it to an $N$-dependent operation count) by using a gPC-goal-oriented variant of the RBM algorithm.

Our algorithm uses a modified {\em a posteriori} error estimate in the traditional RB greedy algorithm. This results in a RBM-gPC hybrid algorithm that can accurately construct a gPC surrogate more efficiently than standard pseudospectral methods. 

\rev{Our convergence metric is the $L^2_\rho$ norm, and so we modify the standard RBM algorithm described in section \ref{sec:background-rbm} so that rigorous $L^2_\rho$ error estimates may be derived.}
We exploit the observation that each $u(\bx, \bmu^q)$ associated with parameter value $\bmu^q$ should have some quantitative measure of importance as indicated by the probability density $\rho\left(\bmu^q\right)$. This idea was explored in \cite{PC}, but our version differs notably from earlier methods since we do not explicitly use $\rho$ as a weight for the \textit{a posteriori} error estimate. 


\subsection{Weighted {\em a posteriori} error estimate}
Design of the RBM error estimate can significantly affect the performance of \yc{the resulting reduced basis method, particularly so for our} goal-oriented approach. \yc{The approximate gPC coefficient formula \eqref{eq:uhat-approximation} is the $w_q$-weighted inner product between the polynomial $\Phi_m$ and $u^{\mathcal{N}}$. Thus, the error estimate should likewise be weighted using the quadrature weight $w_q$.} We emphasize again that our strategy is different from using the probability density function $\rho$ as done in \cite{PC}; even in simple one-dimensional cases, it is easy to see that $w_q \not\propto \rho\left(\bmu^q\right)$ (e.g., Gaussian quadrature or Clenshaw-Curtis quadrature).

We introduce the following 
weighted \textit{a posteriori} error estimate $\Delta^w_{N}(\boldsymbol{\mu})$:
\begin{align}\label{eq:weighted-aposteriori-estimate}
  \yc{\Delta^w_N}\left(\bmu^q\right) &= \yc{\frac{\left\|R_N(\cdot, \bmu^q) \right\|_{X_{\mathcal N}}}{\sqrt{\beta_{\mathrm{LB}}(\bmu^q)}}} \sqrt{Q |w_q|}, & q &= 1, \ldots, Q,
\end{align}
where $\beta_{LB}(\boldsymbol{\mu}^q)$ is a lower bound for the smallest eigenvalue of $\mathcal{N}^\cN$ as given in \eqref{eq:beta-lb}, and $R_N$ is the PDE residual of the order-$N$ surrogate as defined in \eqref{eq:error-equation}. Note that the novel quantity is the factor $\sqrt{Q |w_q|}$; since $w_q$ corresponds to a $Q$-point normalized quadrature rule, the quantity $Q |w_q|$ frequently has $\mathcal{O}(1)$ magnitude. \rev{For example, a univariate Gauss quadrature rules on compact domains $Q w_q \sim \sqrt{1-\left(\mu^q\right)^2} \rho(\mu^q) \sim \mathcal{O}(1)$ for fairly general $\rho$ \cite{Paul}.}  The absolute value bars in \eqref{eq:weighted-aposteriori-estimate} are necessary in general because sparse grid quadrature rules can have negative weights. For a tensor-product Gaussian quadrature rule, the weights are all positive.

The $m$-th Fourier coefficient produced by the hybrid algorithm is 
$\widehat{u}^{\mathcal{N}}_m$ in \eqref{eq:uhat-truth-approximation} and its surrogate  $\widehat{u}^{N}_m$
\begin{align}\label{eq:uhat-truth-surrogate}
  \widehat{u}^{{N}}_m = {\sum_{q=1}^Q u^{{N}}(\bmu^q) \Phi_m(\bmu^q) w_q.}.
\end{align}
The form of the weighted {\em a posteriori} error estimate ensures that we can bound the error between this coefficient and the corresponding truth Fourier coefficient. The precise estimate appears \yanlai{in section \ref{sec:analysis}}.

\subsection{Goal-oriented greedy algorithm}
\rev{The greedy algorithm strategy here is essentially the same as in Section \ref{sec:rbm-greedy}. One major difference is that we replace the original error estimate with our weighted one:}
\begin{equation}
\boldsymbol{\nu}^{k+1} = \argmax_{\boldsymbol{\mu} \in \Xi} \Delta^w_{k}(\boldsymbol{\mu})
\end{equation}
We show later in Theorem \ref{thm:qoi-convergence} and Corollary \ref{corollary:qoi-convergence-bound} that the weighting provided by \rev{$\Delta^w_k$} allows one to guarantee that the gPC approximation that is formed from the RBM \yc{surrogates} is within a user-defined tolerance of the gPC-truth approximation.

Another major difference in the goal-oriented algorithm is that the tolerance criterion is tuned to the quantity of interest of the gPC surrogate. At each stage with $k$ RB snapshots, we compute the error estimate
\begin{align}\label{eq:epsilon-def}
  \varepsilon &= C_{Q,M} \sqrt{\frac{1}{Q} \sum_{q=1}^Q \yc{(\Delta^w_k)^2}(\bmu^q) }, \mbox{ with }  C_{Q,M} = \sum_{m=1}^M \sqrt{\sum_{q=1}^Q |w_q| \Phi_m^2(\bmu^q)} \left|\mathcal{F}\left[ \Phi_m(\bmu)\right]\right|,
\end{align}
where $\mathcal{F}$ denotes the quantity of interest as introduced in \eqref{eq:F-affine}. Note that the constant $C_{Q,M}$ is computable \textit{independent} of the solution $u$, and depends only on the choice of quadrature rule and quantity of interest. (See Lemma \ref{lemma:BQm-gauss-quadrature}, and the discussion following Corollary \ref{corollary:qoi-convergence-bound}.) As we show in Corollary \ref{corollary:qoi-convergence-bound}, $\varepsilon$ is an upper bound on the error in the quantity of interest defined by $\mathcal{F}$ between the inexpensive RBM surrogate and the expensive gPC truth approximation.

A pseudocode implementation of the weighted greedy approach is shown in Algorithm \ref{alg:goal-oriented-greedy}.

\begin{algorithm}[h!]
\caption{Goal-oriented greedy algorithm}\label{alg:goal-oriented-greedy}
\begin{algorithmic}[1]
\State Input: training set $\Xi$ with associated quadrature weights $w_q$;
\State Input: stopping criterion tolerance $\varepsilon_{\mathrm{tol}}$;
\State Input: goal-oriented constant $C_{Q,M} = \sum_{m=1}^M B_{Q,m} \mathcal{F}\left[ \Phi_m\left(\bmu\right)\right]$.
\State Randomly select the first sample $\boldsymbol{\mu^{1}} \in \Xi$;
\State Obtain truth solution $u^\mathcal{N}\left(\bx, \bmu^1\right)$, set $X_1 = \mathrm{span}\left\{ u^{\mathcal{N}}(\boldsymbol{x},\boldsymbol{\mu}^{1})\right\}$;
\State Set $k = 1$ and $\varepsilon = \infty$;
\vspace{0.1in}
\While {$\varepsilon > \varepsilon_{\mathrm{tol}}$}
\vspace{0.05in}
\For{each $\mu \in \Xi$}
\State Obtain RBM solution $u^k(\bx, \bmu)$ by computing $c_j(\bx)$ that satisfy~\eqref{eq:rbm-projection}
\State Compute weighted \textit{a posteriori} error estimate $\yc{\Delta^w_{k}}(\boldsymbol{\mu})$ from \eqref{eq:weighted-aposteriori-estimate}
\EndFor
\vspace{0.05in}
\State Choose $\boldsymbol{\mu}^{k+1} = \argmax_{(\boldsymbol{\mu} \in \Xi)} \Delta^w_{k}(\boldsymbol{\mu})$;
\vspace{0.05in}
\State augment the reduced basis space $X_{k+1} = \mathrm{span} \left\{X_{k} \cup \left\{ u^{\mathcal{N}}(\boldsymbol{x},\boldsymbol{\mu}^{k+1})\right\}\right\}$;
\vspace{0.05in}
\State Calculate $\Delta^{\mathrm{sum}} = \sum_{\boldsymbol{\mu} \in \Xi} (\Delta^w_{N})^2(\boldsymbol{\mu})$; 
\vspace{0.05in}
\State Set $\varepsilon = C_{Q,M} \sqrt{\frac{1}{|\Xi|} \Delta^{\mathrm{sum}}}$.;
\State Set $N \gets N+1$;
\vspace{0.05in}
\EndWhile
\vspace{0.1in}
\end{algorithmic}
\end{algorithm}

\subsection{Goal-oriented hybridized RBM-gPC algorithm}

Summary pseudocode of the goal-oriented gPC-RBM procedure is shown in Algorithm \ref{euclid}. 

The offline stage is Algorithm \ref{alg:goal-oriented-greedy}: the RBM approximation space is built by scanning the quadrature node set $\Xi$ and evaluating the weighted \textit{a posteriori} error estimate $\Delta^w_k$. The expensive PDE solver \eqref{eq:truth-pde} is queried a total of $N$ times over the greedily constructed parameter set $\bnu^1, \ldots, \bnu^N$. The offline phase stops when the computed error indicator $\varepsilon$ defined in \eqref{eq:epsilon-def} falls below the user-defined tolerance $\varepsilon_{\mathrm{tol}}$.

The online stage proceeds as indicated on line \ref{alg:euclid:online} of Algorithm \ref{euclid}. In this phase, the RBM surrogate that was constructed in the offline phase is evaluated several times to compute the approximate gPC coefficients $\left\{ \widehat{u}^N_{m} \right\}_{m=1}^M$. This portion of the algorithm is \textit{much} faster than \naive{} evaluation of \eqref{eq:uhat-truth-approximation}; it is in this section of the procedure where the hybrid algorithm reaps computational savings compared to a traditional stochastic pseudospectral approach.

Once the approximate coefficients $\widehat{u}^N_m$ are collected, the quantity of interest $\mathcal{F}\left[u^N_M\right]$ may be evaluated. A significant benefit of using the hybrid approach is that the error in this computed quantity of interest can be rigorously controlled by the user-defined input tolerance $\varepsilon_{\mathrm{tol}}$; we show this in the next section.

Thus, the hybrid algorithm \textit{both} achieves significant computational savings in construction of a gPC approximation \textit{and} provides strict error bounds on quantities of interest.

\begin{algorithm}
\caption{Hybrid Algorithm}\label{euclid}
\begin{algorithmic}[1]
  \State Input: the general stochastic PDE \eqref{eq:pde} and a tolerance $\varepsilon_{\mathrm{tol}}$.
\vspace{0.1in}
\State{\bf{Offline procedure:}}
\vspace{0.05in}
\State Set $\Xi$ as the training set and use the goal-oriented greedy procedure with tolerance $\varepsilon_{\mathrm{tol}}$ (Algorithm \ref{alg:goal-oriented-greedy}) to compute $N$ reduced basis elements $\{u^{\mathcal{N}}(\boldsymbol{x},\boldsymbol{\mu}^{j})\}_{j = 1}^{N}$.
 \vspace{0.05in}
\State{\bf{end Offline procedure}}
\State Using the online RBM surrogate $u^N(\bx,\bmu)$, compute $M$-term gPC coefficients and approximation of $u^N$:\label{alg:euclid:online}
\begin{align*}
  \widehat{u}^N_m &= \sum_{q=1}^Q w_q u^N\left(\bx, \bmu^q\right) \Phi_m\left(\bmu^q\right)\\
  u_M^N(\boldsymbol{x},\boldsymbol{\mu}) &= \sum_{m = 1}^{M} \hat{u}^N_{m}(\boldsymbol{x})\Phi_{m}(\boldsymbol{\mu}).
\end{align*}
\State Output: Solution $u^N_M$, or quantity of interest $\mathcal{F} [u^N_M] = \sum_{m=1}^M \theta_{\mathcal{F}}[\widehat{u}_m^N(\bx)] \mathcal{F}\left[ \Phi_m(\bmu)\right]$.
\vspace{0.1in}
 \vspace{0.1in}
\end{algorithmic}
\end{algorithm}

\rev{\subsubsection{Computational cost}
The computational complexities of each step of the hybrid algorithm are as follows. Let $\cS(\cN)$ denote the 
cost of one truth solve; depending on the problem and the solver employed, this typically varies between $\mathcal{O}(\cN)$ and $\mathcal{O}\left(\cN\,^3\right)$. 

The offline portion of the algorithm requires repeated scanning of parameter space for maximization of $\Delta^w_k$. A \naive{} scanning requires $Q$ evaluations of $\Delta^w_k$. 
However, we can be more efficient. For example, we may trim entries in the set $\{\bmu \in \Xi: \Delta^w_{N}(\bmu) < \frac{\epsilon_{\rm tol}}{2 C_{Q,M}}\}$ from $\Xi$ after each loop. We let $n(\Xi, k)$ denote the size of the set 
\begin{align*}
  n(\Xi, k) &= \left| \left\{ \bmu \in \Xi \; | \; \Delta^w_{k-1} \leq \frac{\epsilon_{\rm tol}}{2 C_{Q,M}} \right\} \right|, & \Delta^w_0(\bmu) &\equiv \infty
\end{align*}
Obviously $n(\Xi, k) \le |\Xi| \equiv Q$, and $n(\Xi,k+1) \leq n(\Xi, k)$. Finally, we define $Q^{\rm trim}_N \coloneqq \sum_{k=1}^Nn(\Xi,k)$. Using this notation we can give a rough operation count of the entire hybrid gPC-RBM algorithm.}

\rev{
The offline portion of the algorithm has complexity of the order
\begin{align*}
  \overbrace{QM + Q_f^2 \cN}^{\textrm{offline preparation}} +  \overbrace{\left(N^4 + Q_L^2 N^3 + Q_f Q_L N^2 + Q_f^2\right) Q_N^{\rm trim}}^{\textrm{Evaluation of $\Delta^w_k$ over $\Xi$}} + \overbrace{N \cS(\cN)}^{\textrm{truth solver}} + \overbrace{(Q_L^2 N^2 + Q_f Q_L) \cN\,^2}^{\textrm{online preparation}} 
\end{align*}
Below are more detailed explanations of this operation count:
\begin{itemize}[itemsep=0pt]
  \item \textit{Offline preparation} --- evaluation of $C_{Q,M}$ and computations enabling optimized evaluations of $\left\|f(\bmu)\right\|_{\mathcal{X}^\cN}$.
  \item \textit{Evaluation of $\Delta^w_k$} --- $n(\Xi,k)$ evaluations for $k=1, \ldots, N$ results in the $Q^{\rm trim}_N$ factor. Each evaluation at stage $k$ requires $k^2 Q_L + k Q_f + k^3$ operations to evaluate the $k$th RBM surrogate and its PDE residual, plus $Q_f^2 + Q_L^2 k^2 + Q_f Q_L k$ operations to evaluate the error estimate.
  \item \textit{truth solver} --- $N$ solves of the truth-discretized PDE \eqref{eq:truth-pde}
  \item \textit{online preparation} --- for each $k = 1, \ldots, N$, optimizations for fast evaluations of $\Delta^w_{k+1}$ require $Q_L^2 k \cN^2 + Q_f Q_L \cN^2$, and optimizations for efficient computation of the $k$th RB surrogate require $Q_L k \cN^2 + Q_f \cN$ operations. The sum of these is domimated by the term shown.
\end{itemize}

The online portion of the algorithm has complexity on the order of $\left(N^2 Q_L + N Q_f + N^3\right)Q$. We emphasize that this complexity is $\cN$-independent, but it does depend on $Q$.


\begin{remark} From this analysis, we see that it is worthwhile to apply the hybrid algorithm only when $Q \cS(\cN)$ (the brute-force cost of \eqref{eq:uhat-truth-approximation}) is more than the total offline and online time combined. This is typically achievable when the $N$-width is small (so that $N \ll \cN$) and when $Q_f Q_L \ll \cN$. On the other hand, we note that there are extreme scenarios (such as when $Q_L$, $Q_f$, and $N$ are all large) when it is more efficient to compute gPC coefficients directly via \eqref{eq:uhat-truth-approximation}.
\end{remark}
The hybrid algorithm cost still scales linearly with $Q$, with $Q \sim P^K$ for tensorial quadrature rules. For this reason, the algorithm only delays the curse of dimensionality. However, when the RBM procedure is effective, this algorithm is still orders-of-magnitude more efficient than a direct gPC approximation via stochastic pseudospectral approximation.
}

\section{Analysis of the hybrid algorithm}
\label{sec:analysis}
In this section, we show the convergence of this goal-oriented gPC-RBM algorithm. More precisely, we show that the error committed by the hybrid RBM algorithm is controlled by the user-defined input parameter $\varepsilon_{\mathrm{tol}}$ in Algorithm \ref{euclid}. Given a {$P$-th} order $M$-term gPC projection \eqref{eq:eq1}, the $m$-th truth Fourier coefficient $\widehat{u}^{\mathcal{N}}_m$ is evaluated by \eqref{eq:uhat-truth-approximation} and its surrogate  $\widehat{u}^{N}_m$ by \eqref{eq:uhat-truth-surrogate}.  The truth and reduced basis stochastic solutions are then, respectively,
\begin{align}\label{eq:umn-truth}
  u^{\mathcal{N}}_M(\boldsymbol{x},\boldsymbol{\mu}) &= \sum_{m = 1}^{M} \hat{u}^{\mathcal{N}}_{m}(\boldsymbol{x})\Phi_{m}(\boldsymbol{\mu}), \\\label{eq:umn-rb}
  {u}^N_M(\boldsymbol{x},\boldsymbol{\mu}) &= \sum_{m = 1}^{M} \hat{u}^{N}_{m}(\boldsymbol{x})\Phi_{m}(\boldsymbol{\mu}). 
\end{align}
The properties of the RBM algorithm allow us to bound the error between the computationally expensive $\cF\left[ u^{\mathcal{N}}_M \right]$ and the efficiently computable hybrid surrogate $\cF \left[ u^N_M \right]$. This bound is our main theoretical result and is shown in Theorem \ref{thm:qoi-convergence}.

To begin, we first need to control the error between the full stochastic quadrature gPC coefficients $u^{\cN}_m\left(\bx\right)$ and its RBM surrogate $u^N_m\left(\bx\right)$.

\begin{lemma}
\label{lemma:gpc-coe-bound}
Given a $Q$-point quadrature rule $\left\{\bmu^q, w_q\right\}_{q=1}^Q$ and an $N$-dimensional reduced basis approximation $u^N(\bx, \bmu)$ for the solution $u(\bx,\bmu)$ to the PDE \eqref{eq:pde}, the error in the $m$-th gPC coefficient is given by 
  \begin{align*}
    \left\| \widehat{u}\truth_m(\bx) - \widehat{u}^N_m(\bx) \right\|_{\yc{X_{\mathcal N}}} \leq B_{Q,m} \sqrt{\frac{1}{Q} \sum_{q=1}^Q (\Delta^w_N)^2\left(\bmu^q\right)},
  \end{align*}
  where $B_{Q,m}$ is the uncentered second moment of $\Phi_m(\bmu)$ under the discrete measure defined by the quadrature rule:
  \begin{align} \label{eq:BQm-definition}
    B_{Q,m} &= \sqrt{\sum_{q=1}^Q |w_q| \Phi_m^2(\bmu^q)}.
  \end{align}
\end{lemma}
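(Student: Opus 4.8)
The plan is to expand the coefficient difference directly as a quadrature sum over the nodes $\bmu^q$, then apply a weighted Cauchy--Schwarz inequality arranged so as to isolate exactly the factor $B_{Q,m}$, and finally invoke the rigor of the \textit{a posteriori} estimate to control the remaining quadrature sum by the weighted estimates $\Delta^w_N$.

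First, subtracting \eqref{eq:uhat-truth-surrogate} from the truth coefficient \eqref{eq:uhat-truth-approximation} and using linearity of the sum gives
\[
  \widehat{u}\truth_m(\bx) - \widehat{u}^N_m(\bx) = \sum_{q=1}^Q \left( u\truth(\bx, \bmu^q) - u^N(\bx, \bmu^q)\right) \Phi_m(\bmu^q)\, w_q.
\]
Writing $e(\bx, \bmu^q) \coloneqq u\truth(\bx, \bmu^q) - u^N(\bx, \bmu^q)$, applying the triangle inequality in $X_\cN$, and splitting each weight as $|w_q| = \sqrt{|w_q|}\cdot\sqrt{|w_q|}$, I obtain
\[
  \left\| \widehat{u}\truth_m - \widehat{u}^N_m\right\|_{X_\cN} \leq \sum_{q=1}^Q \left( \left\|e(\cdot,\bmu^q)\right\|_{X_\cN} \sqrt{|w_q|}\right) \left( \left|\Phi_m(\bmu^q)\right| \sqrt{|w_q|}\right).
\]

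The key step is the discrete Cauchy--Schwarz inequality applied to this product, which yields
\[
  \left\| \widehat{u}\truth_m - \widehat{u}^N_m\right\|_{X_\cN} \leq \sqrt{\sum_{q=1}^Q \left\|e(\cdot, \bmu^q)\right\|^2_{X_\cN} |w_q|}\;\cdot\;\sqrt{\sum_{q=1}^Q |w_q|\, \Phi_m^2(\bmu^q)}.
\]
The second factor is precisely $B_{Q,m}$ as defined in \eqref{eq:BQm-definition}, so it remains only to bound the first. For this I would use two facts about the weighted estimate: by its definition \eqref{eq:weighted-aposteriori-estimate} one has $(\Delta^w_N)^2(\bmu^q) = \Delta_N^2(\bmu^q)\, Q |w_q|$, while the rigor of the standard estimate (stated just before \eqref{eq:weighted-aposteriori-estimate}) gives $\Delta_N(\bmu^q) \geq \left\|e(\cdot, \bmu^q)\right\|_{X_\cN}$ since each $\bmu^q \in \Xi$. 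Combining these produces $\left\|e(\cdot, \bmu^q)\right\|^2_{X_\cN} |w_q| \leq (\Delta^w_N)^2(\bmu^q)/Q$; summing over $q$ and taking square roots bounds the first factor by $\sqrt{\tfrac1Q \sum_{q=1}^Q (\Delta^w_N)^2(\bmu^q)}$, completing the proof.

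I expect no genuine obstacle here: the argument is essentially careful bookkeeping. The one point demanding attention --- and indeed the reason the weighted estimate \eqref{eq:weighted-aposteriori-estimate} was endowed with the factor $\sqrt{Q|w_q|}$ --- is arranging the weight splitting so that Cauchy--Schwarz produces exactly $B_{Q,m}$ on one side and a clean multiple of $\sum_{q=1}^Q (\Delta^w_N)^2(\bmu^q)$ on the other; any other allocation of the $|w_q|$ factors would spoil the clean form. One should also note that the triangle inequality is legitimate because each $e(\cdot, \bmu^q) \in X_\cN$, as both $u\truth(\cdot,\bmu^q)$ and $u^N(\cdot,\bmu^q) \in X^N \subset X_\cN$.
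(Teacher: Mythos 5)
Your proof is correct and follows essentially the same route as the paper's: expand the coefficient difference as a quadrature sum, split each $|w_q|$ as $\sqrt{|w_q|}\cdot\sqrt{|w_q|}$ inside the triangle inequality, apply discrete Cauchy--Schwarz to isolate $B_{Q,m}$, and control the remaining factor via $\|e(\cdot,\bmu^q)\|_{X_{\mathcal{N}}}\le \Delta_N(\bmu^q)$ together with $(\Delta^w_N)^2(\bmu^q) = Q|w_q|\,\Delta_N^2(\bmu^q)$. (One trivial slip in your closing remark: the truth solution $u^{\mathcal{N}}(\cdot,\bmu^q)$ lies in $X_{\mathcal{N}}$ but generally not in $X^N$; only $u^N(\cdot,\bmu^q)$ lies in $X^N$, which is all that is needed to conclude $e(\cdot,\bmu^q)\in X_{\mathcal{N}}$.)
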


\begin{proof}
  We use the quadrature representation for these functions to prove the result:
  \begin{align*}
    \left\| \widehat{u}\truth_m(\bx) - \widehat{u}^N_m(\bx)\right\|_{\yc{X_{\mathcal N}}} &= \left\| \sum_{q=1}^Q w_q \Phi_m\left(\bmu^q\right) \left[ u\truth\left(\bx, \bmu^q\right) - u^N\left(\bx,\bmu^q\right) \right]\right\|_{\yc{X_{\mathcal N}}} \\
                                                                                &\leq \sum_{q=1}^Q \left| \sqrt{|w_q|} \Phi_m\left(\bmu^q\right) \right| \left(\sqrt{|w_q|} \left\|e(\bx,\bmu^q)\right\|_{\yc{X_{\mathcal N}}}\right) \\
                                                                                &\leq \sqrt{\sum_{q=1}^Q |w_q| \Phi_m^2\left(\bmu^q\right) } \sqrt{\sum_{q=1}^Q |w_q| \left\| e(\bx,\bmu^q) \right\|^2_{\yc{X_{\mathcal N}}}} \\
                                                                                &\leq B_{Q,m} \sqrt{\sum_{q=1}^Q \frac{1}{Q} \yc{(\Delta^w_N)^2}\left(\bmu^q\right)}
  \end{align*}
\end{proof}

This result bounds the error in each gPC coefficient as a product of two terms: the first term $B_{Q,m}$ is a measure of the (polynomial degree of) accuracy of the quadrature rule, which is computable and independent of the solution $u$. The second term is an average of the weighted \textit{a posteriori} error estimate over parameter space. 

We can now bound the RBM error in the quantity of interest.
\begin{theorem}\label{thm:qoi-convergence}
Given an $M$-term gPC projection \eqref{eq:eq1} and an $N$-dimensional reduced basis approximation \eqref{eq:rbm-projection},
  the error in the quantity of interest computed from the RBM-gPC approximation $u^N_M$, and that computed from the truth gPC approximation $u\truth_M$ is 
  \begin{align}\label{eq:qoi-convergence}
    \left\| \mathcal{F} \left[u^N_M\right] - \mathcal{F}\left[u\truth_M\right] \right\|_{\yc{X_{\mathcal N}}} \leq C_{\rm Lip} \, C_{Q,M} \sqrt{\yc{\frac{1}{Q}}\sum_{q=1}^Q \left[ \Delta^{w}_N(\bmu^q)\right]^2 },
  \end{align}
  where $C_{\mathrm{Lip}}$ is the Lipschitz constant of $\cF$ defined in \eqref{eq:F-lipschitz}, and $C_{Q,M}$ is a constant independent of $u$, defined by
  \begin{align}
    C_{Q,M} &= \sum_{m=1}^M B_{Q,m} \left|\mathcal{F} \left[ \Phi_m(\bmu) \right]\right|, 
  \end{align}
  with $B_{Q,m}$ defined in \eqref{eq:BQm-definition}.
\end{theorem}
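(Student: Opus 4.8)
The plan is to chain together the two structural properties of the quantity-of-interest map $\cF$ with the coefficient-wise bound already established in Lemma~\ref{lemma:gpc-coe-bound}. First I would invoke the affine representation \eqref{eq:F-affine} for $\cF$ applied separately to the reduced-basis expansion $u^N_M$ and the truth expansion $u\truth_M$. Since both share the same polynomial basis $\{\Phi_m\}$, subtracting the two representations causes the $\cF[\Phi_m(\bmu)]$ factors to persist while the difference appears only through the coefficient functions:
\begin{align*}
  \cF\left[u^N_M\right] - \cF\left[u\truth_M\right] = \sum_{m=1}^M \left( \theta_{\cF}\!\left(\widehat{u}^N_m(\bx)\right) - \theta_{\cF}\!\left(\widehat{u}\truth_m(\bx)\right)\right) \cF\left[\Phi_m(\bmu)\right].
\end{align*}

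Next I would take the $X_{\mathcal N}$-norm and apply the triangle inequality across the sum over $m$, pulling the scalar $\left|\cF[\Phi_m(\bmu)]\right|$ out of each term. The second assumption on the goal functional---Lipschitz continuity of $\theta_{\cF}$ from \eqref{eq:F-lipschitz}---then converts each norm $\left\|\theta_{\cF}(\widehat{u}^N_m) - \theta_{\cF}(\widehat{u}\truth_m)\right\|$ into $C_{\rm Lip}$ times the coefficient error $\left\|\widehat{u}^N_m(\bx) - \widehat{u}\truth_m(\bx)\right\|_{X_{\mathcal N}}$. At this point Lemma~\ref{lemma:gpc-coe-bound} supplies the bound $B_{Q,m}\sqrt{\tfrac{1}{Q}\sum_{q=1}^Q (\Delta^w_N)^2(\bmu^q)}$ for each coefficient error.

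The key observation that closes the argument is that the square-root factor $\sqrt{\tfrac{1}{Q}\sum_{q=1}^Q (\Delta^w_N)^2(\bmu^q)}$ is \emph{independent of $m$}, so it factors out of the sum. What remains is exactly $C_{\rm Lip}\cdot\left(\sum_{m=1}^M B_{Q,m}\left|\cF[\Phi_m(\bmu)]\right|\right)$, and the parenthetical is by definition $C_{Q,M}$, yielding \eqref{eq:qoi-convergence}. I do not anticipate any genuine obstacle here: the result is essentially an assembly of the affine structure, a single Lipschitz estimate, and the prior lemma. The only point requiring a little care is bookkeeping of the norm---the coefficient differences live in $X_{\mathcal N}$ while the $\cF[\Phi_m]$ are scalars---so that the triangle inequality and the factoring of the $m$-independent term are applied to the correct quantities. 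The design of the weighted estimate $\Delta^w_N$ in \eqref{eq:weighted-aposteriori-estimate}, with its $\sqrt{Q|w_q|}$ factor, is precisely what makes the $m$-independent averaged quantity emerge cleanly from Lemma~\ref{lemma:gpc-coe-bound}, so no additional estimates are needed.
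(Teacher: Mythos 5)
Your proposal is correct and follows essentially the same route as the paper's proof: expand the difference via the affine structure \eqref{eq:F-affine}, apply the triangle inequality and the Lipschitz bound \eqref{eq:F-lipschitz} on $\theta_{\cF}$, then invoke Lemma~\ref{lemma:gpc-coe-bound} coefficient-by-coefficient and factor out the $m$-independent term $\sqrt{\tfrac{1}{Q}\sum_{q=1}^Q (\Delta^w_N)^2(\bmu^q)}$ to recover $C_{\rm Lip}\, C_{Q,M}$. No gaps; the bookkeeping point you flag (coefficient differences in $X_{\mathcal{N}}$, scalars $\cF[\Phi_m]$ pulled out) is handled identically in the paper.
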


\begin{proof}
  We begin by using our assumption \eqref{eq:F-affine} regarding the affine dependence of $\mathcal{F}$ on a gPC representation:
  \begin{align*}
    \mathcal{F} \left[u^N_M\right] - \mathcal{F}\left[u\truth_M\right] &= \sum_{m=1}^M \left(\theta_{\mathcal{F}}(\widehat{u}^N_m(\bx)) - \theta_{\mathcal{F}}(\widehat{u}\truth_m(\bx))\right) \mathcal{F}\left[\Phi_m(\bmu)\right].
  \end{align*}
Applying triangle inequality and Lipschitz continuity of $\theta_{\mathcal{F}}(\cdot)$, we have 
  \begin{align*}
    \left\| \mathcal{F} \left[u^N_M\right] - \mathcal{F}\left[u\truth_M\right] \right\|_{\yc{X_{\mathcal N}}} &\leq C_{\rm Lip} \, \sum_{m=1}^M \left\| \widehat{u}\truth_m(\bx) - \widehat{u}^N_m(\bx)\right\|_{X_{\mathcal{N}}} \left|\mathcal{F}\left[ \Phi_m(\bmu)\right]\right|.
  \end{align*}
  Using Lemma \ref{lemma:gpc-coe-bound} on the right-hand side, we obtain 
  \begin{align*}
    \left\| \mathcal{F} \left[u^N_M\right] - \mathcal{F}\left[u\truth_M\right] \right\|_{\yc{X_{\mathcal N}}} &\leq C_{\rm Lip} \, \sum_{m=1}^M B_{Q,m} \left|\mathcal{F}\left[ \Phi_m(\bmu)\right] \right| \sqrt{\yc{\frac{1}{Q}}\sum_{q=1}^Q (\Delta^w_N)^2(\bmu^q)},
  \end{align*}
  and this proves \eqref{eq:qoi-convergence}.
\end{proof}

\begin{corollary}\label{corollary:qoi-convergence-bound}
  The output gPC approximation $u^N_M$ from Algorithm \ref{euclid} satisfies
  \begin{align*}
    \left\| \mathcal{F} \left[u^N_M\right] - \mathcal{F}\left[u\truth_M\right] \right\|_{X_{\mathcal{N}}} \leq C_{\rm Lip} \, \varepsilon
  \end{align*}
  where $\varepsilon$ is defined in \eqref{eq:epsilon-def}. In particular, $\varepsilon$ is dominated by the user-prescribed $\epsilon_{\mathrm{tol}}$ if Algorithm \ref{euclid} terminates successfully.
\end{corollary}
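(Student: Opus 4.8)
The plan is to obtain both assertions as immediate consequences of Theorem \ref{thm:qoi-convergence} combined with the stopping rule of the greedy loop; no new estimates are needed, since Lemma \ref{lemma:gpc-coe-bound} and Theorem \ref{thm:qoi-convergence} already supply the governing bound. The corollary merely specializes that bound to the particular surrogate $u^N_M$ returned at termination and then reads off the consequence of the loop's exit condition.

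Concretely, I would first apply Theorem \ref{thm:qoi-convergence} to the output approximation $u^N_M$, giving
\[
\left\| \mathcal{F} \left[u^N_M\right] - \mathcal{F}\left[u\truth_M\right] \right\|_{X_{\mathcal N}} \leq C_{\rm Lip} \, C_{Q,M} \sqrt{\frac{1}{Q}\sum_{q=1}^Q \left[ \Delta^{w}_N(\bmu^q)\right]^2 }.
\]
The next step is to recognize that the trailing factor $C_{Q,M}\sqrt{\tfrac{1}{Q}\sum_{q=1}^Q [\Delta^w_N(\bmu^q)]^2}$ is exactly the quantity $\varepsilon$ defined in \eqref{eq:epsilon-def}, evaluated at the terminal greedy index $k=N$. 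Substituting this identification yields the first inequality $\|\mathcal{F}[u^N_M] - \mathcal{F}[u\truth_M]\|_{X_{\mathcal N}} \le C_{\rm Lip}\,\varepsilon$. For the second assertion I would invoke the control flow of Algorithm \ref{alg:goal-oriented-greedy}: the \texttt{while} loop runs precisely while $\varepsilon > \varepsilon_{\mathrm{tol}}$, and the $\varepsilon$ recomputed on each pass from $\Delta^{\mathrm{sum}}$ with normalization $1/|\Xi| = 1/Q$ coincides with the $\varepsilon$ of \eqref{eq:epsilon-def}; hence a normal exit negates the loop guard and forces $\varepsilon \le \varepsilon_{\mathrm{tol}}$, which is the claimed domination.

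Since every ingredient is already in hand, there is no substantial obstacle. The only points requiring care are bookkeeping ones: ensuring that the running index $k$ inside the greedy loop is correctly identified with the final reduced dimension $N$, so that the $\varepsilon$ returned at exit genuinely corresponds to $\Delta^w_N$ rather than to an intermediate estimate, and that $|\Xi| = Q$ so the two normalizations agree. I would also flag explicitly that $\varepsilon \le \varepsilon_{\mathrm{tol}}$ is guaranteed only under \emph{successful} termination—i.e.\ termination triggered by the tolerance criterion rather than by reaching a prescribed maximal $N$—which is precisely the hypothesis stated in the corollary, so the conclusion is conditioned on that case.
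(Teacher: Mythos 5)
Your proposal is correct and is exactly the argument the paper intends: the paper gives no separate proof for this corollary precisely because it follows immediately from Theorem \ref{thm:qoi-convergence} once one identifies the factor $C_{Q,M}\sqrt{\tfrac{1}{Q}\sum_{q=1}^Q [\Delta^w_N(\bmu^q)]^2}$ with $\varepsilon$ from \eqref{eq:epsilon-def} and invokes the negated loop guard $\varepsilon \le \varepsilon_{\mathrm{tol}}$ at termination. Your bookkeeping caveats (final index $k=N$, $|\Xi|=Q$, and tolerance-triggered versus maximal-$N$ termination) are exactly the right points to flag and match the paper's framing of ``terminates successfully.''
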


\begin{remark} 
  The Lipschitz constant $C_{\mathrm{Lip}}$ is trivially $1$ when ${\mathcal F}[\cdot]$ is the expected value operator. For the other two cases listed in \eqref{eq:variance} and \eqref{eq:rho-norm}, it is finite as long as we have uniform stability with respect to the parameter $\bmu$ for the computational solver \eqref{eq:truth-pde}. As the discussion around \eqref{eq:uniform-boundedness} indicates, this is a standard assumption, and in that case $C_{\mathrm{Lip}} = 2 U$.
\end{remark}

\begin{remark}
We emphasize that $C_{Q,M}$ is a scalar whose value is \textit{independent} of the solution $u$, its truth discretization $u\truth$, or the RBM surrogate $u^N$. It depends only on the choice of quadrature rule, the gPC order, and the choice of what quantity of interest is to be computed. 
\end{remark}

The coefficient $C_{Q,M}$, depending on $B_{Q,m}$, is not analytically computable in general since it depends on the chosen quadrature rule in parameter space. However, the next lemma shows that \rev{for one of the two quadrature rules we are using in this paper, the tensor-product Gauss quadrature rule, we have $B_{Q,m} \equiv 1$ if the accuracy is sufficiently high.}
\begin{lemma}\label{lemma:BQm-gauss-quadrature}
Let $\left\{\Phi_1, \ldots, \Phi_M \right\}$ span the degree-$P$ isotropic total-degree space, so that $M = \left(\begin{array}{c} K+P \\P \end{array}\right)$. Assume that the quadrature rule \eqref{eq:dense-quadrature-rule} corresponds to an isotropic tensor product quadrature rule with $q$ points in each dimension, totaling $Q = q^K$ nodes. If $P < q$, then $B_{Q,m} \equiv 1$ for $m = 1 \ldots, M$.
\end{lemma}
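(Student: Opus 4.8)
The plan is to recognize that $B_{Q,m}^2$ is nothing but the $Q$-point quadrature approximation of $\left\| \Phi_m \right\|^2_{L^2_{\brho}(\bGamma)}$, and to show that under the stated degree restriction this quadrature is \emph{exact}, so that orthonormality forces the value to be $1$. First I would expand $\Phi_m = \Phi_\alpha = \prod_{k=1}^K \phi^{(k)}_{\alpha_k}(\mu_k)$ for the associated multi-index $\alpha$ with $|\alpha| \le P$, so that $\Phi_\alpha^2 = \prod_{k=1}^K \bigl(\phi^{(k)}_{\alpha_k}(\mu_k)\bigr)^2$ is a separable (tensor-product) polynomial whose degree in each variable $\mu_k$ equals $2\alpha_k$.

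The crucial degree count is that $0 \le \alpha_k \le |\alpha| \le P$ for every $k$, and the hypothesis $P < q$ gives $\alpha_k \le q-1$, hence $2\alpha_k \le 2q-2 \le 2q-1$. Since a univariate $q$-point Gauss rule integrates all polynomials of degree at most $2q-1$ exactly, I would then argue by tensorization: both the integrand $\Phi_\alpha^2 \brho$ and the nodal weights $w_q = \prod_{k=1}^K w^{(k)}$ are separable, so Fubini reduces the $K$-dimensional exactness to a product of $K$ univariate exact integrals, each equal to $\int_{\Gamma_k} \bigl(\phi^{(k)}_{\alpha_k}\bigr)^2 \rho_k \, \dx{\mu_k} = 1$. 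At this step I would also invoke the positivity of Gauss weights so that $|w_q| = w_q$, which is exactly what makes the absolute value in \eqref{eq:BQm-definition} harmless here.

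Putting the pieces together yields $B_{Q,m}^2 = \sum_{q=1}^Q w_q \Phi_m^2(\bmu^q) = \int_{\bGamma} \Phi_m^2(\bmu)\,\brho(\bmu)\,\dx{\bmu} = 1$, using orthonormality of the $\{\Phi_m\}$, and therefore $B_{Q,m} = 1$. The computation is essentially routine once the bookkeeping is in place; the only point that genuinely requires care --- and the main (minor) obstacle --- is confirming that the per-variable degree $2\alpha_k$ never exceeds the exactness threshold $2q-1$, which is precisely where the hypothesis $P < q$ is used, together with checking that the tensor-product structure of both the integrand and the weights permits the clean reduction to univariate exactness.
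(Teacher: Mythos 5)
Your proposal is correct and follows essentially the same route as the paper's proof: factor $\Phi_m^2$ into a tensor product of univariate squared polynomials, bound each per-variable degree by $2P \le 2q-2 < 2q-1$ using $\alpha_k \le |\alpha| \le P < q$, invoke exactness of the $q$-point Gauss rule together with positivity of its weights (so the absolute values are harmless), and conclude $B_{Q,m}^2 = \E\,\Phi_m^2(\bmu) = 1$ by orthonormality. Your write-up is slightly more explicit about the Fubini/tensorization step, but the argument is the same.
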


\begin{proof}
  This is a simple consequence of the fact that a $q$-point Gaussian quadrature rule exactly integrates polynomials up to degree $2 q -1$. With $\alpha$ the size-$K$ multi-index corresponding to the linear index $m$, then 
  \begin{align*}
    \Phi_m^2(\bmu) &= \prod_{k=1}^K \phi^2_{\alpha_k}(\mu_k), & \alpha_k \leq |\alpha| = P.
  \end{align*}
  Thus, in dimension $k$, we have $\deg \phi^2_{\alpha_k} \leq 2 P < 2 q$. Additionally, a Gauss quadrature rule has all positive weights. Therefore, the quadrature rule integrates the polynomial in each dimension exactly, so $B^2_{Q,m} = \sum_{j=1}^Q w_j \Phi_m^2\left(\bmu^j\right) = \E \Phi^2_m(\bmu) = 1$.
\end{proof}

Of course, similar statements about $B_{Q,m}$ can be made for non-total-degree or anisotropic spaces so long as one has a good understanding of the quadrature rule. Even if one cannot analytically derive values for $B_{Q,m}$, it is easily and inexpensively computable by applying the quadrature rule to the gPC basis $\Phi^2_m$. 

\begin{table}[htpb]
 \caption{Summary of explicit values for bounding constants $B_{Q,m}$ and $C_{Q,M}$ for common quantity of interest operators $\cF$. In the table, $\delta_{j,k}$ is the Kronecker delta function.}\label{tab:qoi-table}
  \begin{center}
  \resizebox{\textwidth}{!}{
    \renewcommand{\tabcolsep}{0.4cm}
    \renewcommand{\arraystretch}{1.5}
    {\small
      \begin{tabular}{@{}cccp{4cm}@{}}\toprule
      Quantity of interest & $B_{Q,m}$ & $C_{Q,M}$ & Assumptions \\ \midrule
        Mean value, $\mathcal{F} = \E$ & 1 & 1 & The quadrature rule \eqref{eq:dense-quadrature-rule} exactly integrates the constant function, all weights are positive\\
        Variance, $\mathcal{F} = \mathrm{var}$ & $\sqrt{\sum_{q=1}^Q |w_q| \bPhi^2_m(\bmu_q)}$ & $\sum_{m=2}^M \sqrt{\sum_{q=1}^Q |w_q| \bPhi^2_m(\bmu_q)}$ & \yanlai{None} \\
                                               & $1 - \delta_{m,1}$ & $M-1$ & Assumptions of Lemma \ref{lemma:BQm-gauss-quadrature}\\
        $L^2_\rho$-norm squared, $\cF\left[\cdot\right] = \left\| \cdot\right\|^2_{L^2_\rho}$ & $\sqrt{\sum_{q=1}^Q |w_q| \bPhi^2_m(\bmu_q)}$ & $\sum_{m=1}^M \sqrt{\sum_{q=1}^Q |w_q| \bPhi^2_m(\bmu_q)}$ & \yanlai{None} \\
                                                                                              & $1$ & $M$ & Assumptions of Lemma \ref{lemma:BQm-gauss-quadrature}\\
    \bottomrule
    \end{tabular}
  }
    \renewcommand{\arraystretch}{1}
    \renewcommand{\tabcolsep}{12pt}
  }
  \end{center}
\end{table}

Thus under certain assumptions, the constants $C_{Q,M}$ and $B_{Q,m}$ are explicitly computable. We summarize some of these results in Table \ref{tab:qoi-table}. \rev{In the case of a sparse grid quadrature rule, we cannot compute these constants analytically, but they can be computed numerically with ease using available sparse grid software (e.g., \cite{Tasmanian_Code}).}

As an illustration, consider dimension $K=4$ with two different tensorial gPC basis sets: Legendre polynomials, and Jacobi polynomials (with parameters $\alpha = 1$, $\beta=1$). We use a Gauss-Patterson-based sparse grid and compute $B_{Q,m}$. The results are shown in Figure \ref{fig:bound}. It is difficult to discern a pattern for the constants shown in the figure, but they are all $\mathcal{O}(1)$ for the range of parameters shown.
\begin{figure}[h]
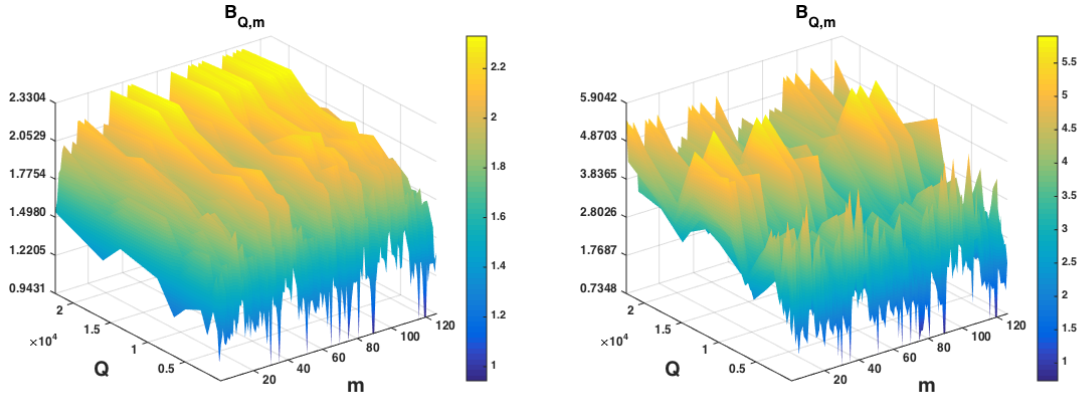

\centering
\includegraphics[width =0.49 \textwidth]{legendre_polynomial.png}
\includegraphics[width = 0.49 \textwidth]{beta.png}
\caption{Value of $B_{Q,m}$ with Legendre polynomial (left) and Jacobi polynomial (right). $Q$ varies with the accuracy level of sparse grid (level 7 to level 16). m records the sequence of gPC bases with $degree \le 5$   }
\label{fig:bound}
\end{figure}

\section{Numerical results}
\label{sec:results}

In this section, we present numerical results to illustrate the accuracy of the proposed hybrid approach 
and its efficiency compared to the conventional gPC method. 
The PDE with random inputs is the following  linear elliptic equation posed on the spatial domain $D = [-1,1] \times [-1,1]$ with homogeneous Dirichlet boundary conditions, 
\begin{align}
\label{eq:numerical-pde}
\begin{cases}
- \nabla \cdot (a(\boldsymbol{x} , \boldsymbol{\mu}) \nabla u(\boldsymbol{x},\boldsymbol{\mu}) ) = f  \quad   \mbox{ in }\quad  D \times \boldsymbol{\Gamma},\\
u(\boldsymbol{x},\boldsymbol{\mu}) = 0  \quad  \quad \quad \quad \quad \quad \qquad  \mbox{ on } \quad \partial D \times  \boldsymbol{\Gamma}.
\end{cases}
\end{align}
The diffusion coefficient $a(\boldsymbol{x},\boldsymbol{\mu}) $ is defined as:
\begin{equation*}
a(\boldsymbol{x},\boldsymbol{\mu}) = A + \sum_{k=1}^{K} 
\frac{{\cos(30 \mu_{k}-1)}}{k^2}\cos(kx)\sin(ky), 
\end{equation*}
where $K$ is the parameter dimension and \rev{ we set $A$ to be a positive constant 
that is large enough so that the equation is uniformly elliptic on $D$.}  We take as the right hand side $f = 1$. 
\rev{As a first step, we test our algorithm on this canonical example \cite{Evans1998} which is often tested in the 
gPC community \cite{xiu_high-order_2005, xiu_wiener--askey_2002}.} As we mentioned in \eqref{eq:F-affine}, the output of interest $\mathcal{F}$ is defined as a certain functional of the solution over the physical domain $D$. 
Here, we explore the following two cases:
\begin{itemize}
\item $\mathcal{F} = \E$. By \eqref{eq:mean-value}, the error of the mean value we observe is
\begin{align}
\label{eq:error-mean}
\xi_{\rm mean} = \left\lVert \mathcal{F}[u_M^{\mathcal{N}}] - \mathcal{F}[u_M^{N}] \right\rVert_{\ell^2(D)} = \left\lVert\widehat{u}^{\mathcal{N}}_{1}(\boldsymbol{x}) - \widehat{u}^{N}_{1}(\boldsymbol{x})\right\rVert_{\ell^2(D)}
\end{align}
\item $\mathcal{F} = \lVert \cdot \rVert^2_{L_{\rho}^2}$. By \eqref{eq:rho-norm},  the error of norm-squared operator 
we evaluate is
\begin{align}
\label{eq:error-norm}
\xi_{\rm norm} = \left\lVert \mathcal{F}[u_M^{\mathcal{N}}] - \mathcal{F}[u_M^{N}] \right\rVert_{\ell^2(D)}  = \left\lVert\sum_{m=1}^{M}(\widehat{u}_{m}^{\mathcal{N}}(\boldsymbol{x}))^2 - \sum_{m = 1}^{M}(\widehat{u}_{m}^{N}(\boldsymbol{x}))^2\right\rVert_{\ell^2(D)}
\end{align}
\end{itemize}  

In our numerical experiments, we test  problem \eqref{eq:numerical-pde} with $A = 5$ and $K = 2,4,6$.  
For the gPC approximation, we use the degree-$5$ total degree space, $\mathcal{U}_K^5$ defined in \eqref{eq:total-degree-definition}.  
We adopt a tensor product quadrature rule for lower dimensional case $K = 2$, 
and a Gauss-Patterson-based sparse grid for the higher dimensional cases $K = 4,6$. See Table \ref{tab:num-quad} 
for the nodal count $Q$ and dimension $M$ of the gPC approximation space for each $K$.  
The truth approximation 
is the solution from a pseudospectral solver 
on a $\mathcal{N} = 35 \times 35$ spatial grid.
The online solver of the goal-oriented reduced basis method (i.e., the projection operator $\mathcal{P}$ defined in \eqref{eq:rbm-projection}) is the least squares reduced collocation method developed in \cite{Yc}. 
We test the algorithm for two different probability distributions for the random variable $\bmu$, namely, the uniform distribution and Beta distribution with shape parameters $\alpha  = 2, \beta = 2$. \\

We plot the expected value $\E(u_M^\mathcal{N})$ in Figure \ref{fig:ExpValue}. The error estimates $ \varepsilon$ defined in \eqref{eq:epsilon-def} with $\mathcal{F} = \E $ and $\mathcal{F} = \lVert \cdot \rVert^2_{L_{\rho}^2}$ are plotted in Figure \ref{fig:error-estimate-mean} and Figure \ref{fig:error-estimate-norm} respectively, 
both displaying exponential convergence.
We then evaluate the actual error in the quantity of interest of the resulting surrogate solution, as defined in \eqref{eq:error-mean} and \eqref{eq:error-norm}. These are 
shown in Figure \ref{fig:error} for the two choices, $\E(\cdot)$ and $\lVert \cdot \rVert^2_{L^2_\rho}$. 
We note that both $\xi_{\textrm{mean}}$ and $\xi_{\textrm{norm}}$  have a clear exponential trend in convergence 
for both probability distributions. 
Finally, we measure the efficiency of the hybrid algorithm by calculating the ratio of the runtime between those 
of the hybrid and the traditional gPC quadrature approach. This is plotted in Figure \ref{fig:efficiency}.  Here the time for the hybrid algorithm includes \textit{both} offline and online time. 
It is clear that the proposed hybrid gPC-RBM method can reach a high level of accuracy (Figure \ref{fig:error}) 
while significantly alleviating the computational burden (Figure \ref{fig:efficiency}).
Moreover, we observe that the efficiency is increasing as $K$ gets larger, \rev{at least for the type of equations that is currently tested}. Therefore, this alleviation is more 
significant for high-dimensional problems, indicating great potential of the hybrid approach for larger parametric dimensions.

\begin{table}[htbp]
\caption{The number of quadrature nodes $Q$ and gPC approximation space dimension $M$ as a function of parametric dimension $K$.}
\label{tab:num-quad}
\begin{center}
\renewcommand{\arraystretch}{1.3}
{\begin{tabular}{|c|c|c|c|} 
\hline 
K & 2  & 4   & 6  \\
\hline
Q(K) & 1,600 & 22,401 & 367,041 \\
\hline
M(K) & 21 & 126 & 462 \\
\hline
\end{tabular}}
\end{center}
\end{table}

\begin{figure}[h]
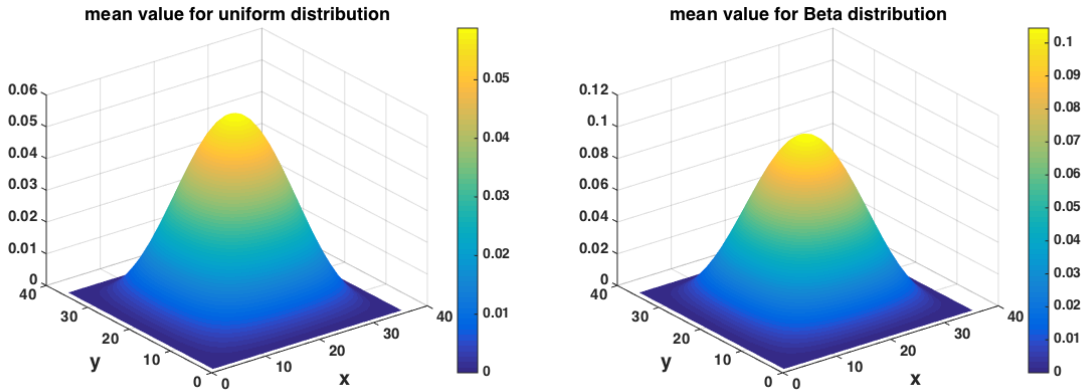

\centering
\includegraphics[width = 0.49\textwidth]{ex_uniform.png}
\includegraphics[width = 0.49\textwidth]{ex_beta.png}
\caption{Expected value when $K = 4$  for uniform (left) and Beta (right) distribution.}
\label{fig:ExpValue}
\end{figure}

\begin{figure}[h]
\centering
\includegraphics[width = 0.49 \textwidth]{es_uniform_mean.png}
\includegraphics[width = 0.49 \textwidth]{es_beta_mean.png}
\caption{Error estimate $ \varepsilon$ with $\mathcal{F} = \E$ for uniform distribution (left) and Beta distribution (right).}
\label{fig:error-estimate-mean}
\end{figure}

\begin{figure}[h]
\centering
\includegraphics[width = 0.49 \textwidth]{es_uniform_norm.png}
\includegraphics[width = 0.49 \textwidth]{es_beta_norm.png}
\caption{Error estimate $ \varepsilon$ with $\mathcal{F} = \lVert \cdot \rVert^2_{L_{\rho}^2}$ for uniform distribution (left) and Beta distribution (right).}
\label{fig:error-estimate-norm}
\end{figure}

\begin{figure}[h]
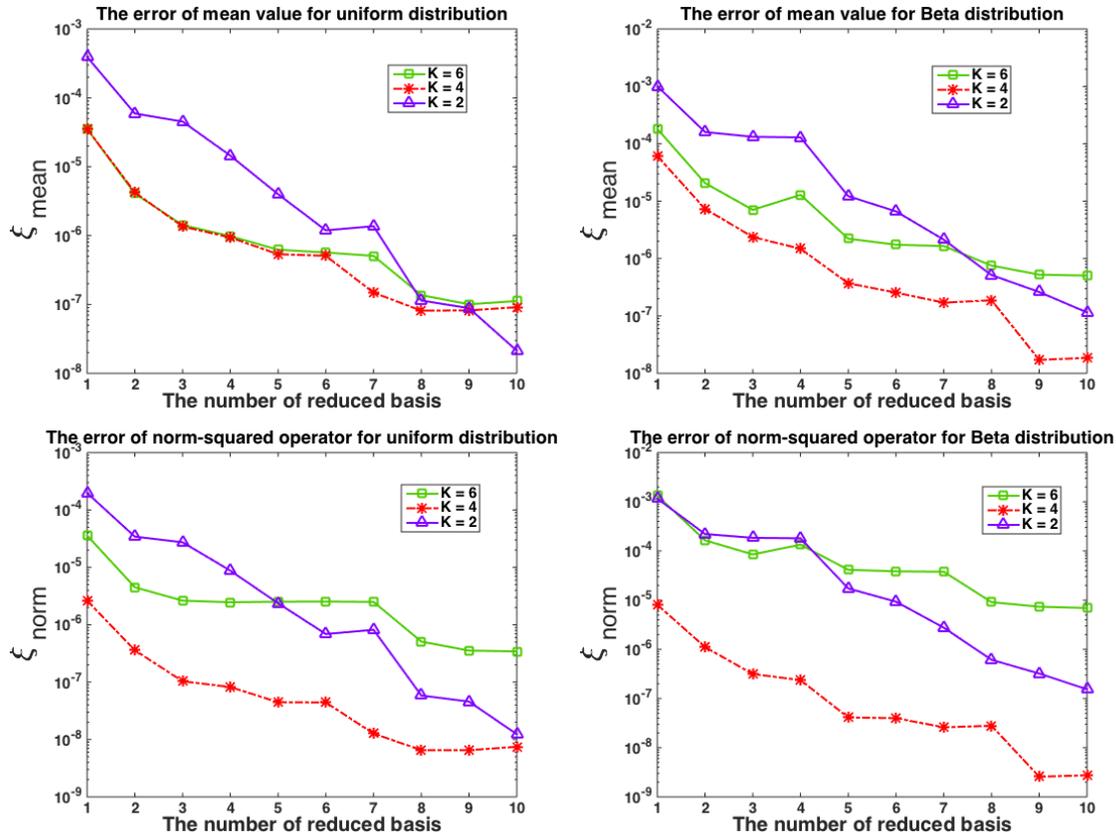

\centering
\includegraphics[width = 0.49 \textwidth]{mean_uniform.png}
\includegraphics[width = 0.49 \textwidth]{mean_beta.png}
\includegraphics[width = 0.49 \textwidth]{solution_uniform.png}
\includegraphics[width = 0.49 \textwidth]{solution_beta.png}
\caption{Error of the expected value (top) and norm-squared operator (bottom) for uniform distribution (left) and Beta distribution (right).}
\label{fig:error}
\end{figure}

\begin{figure}[h]
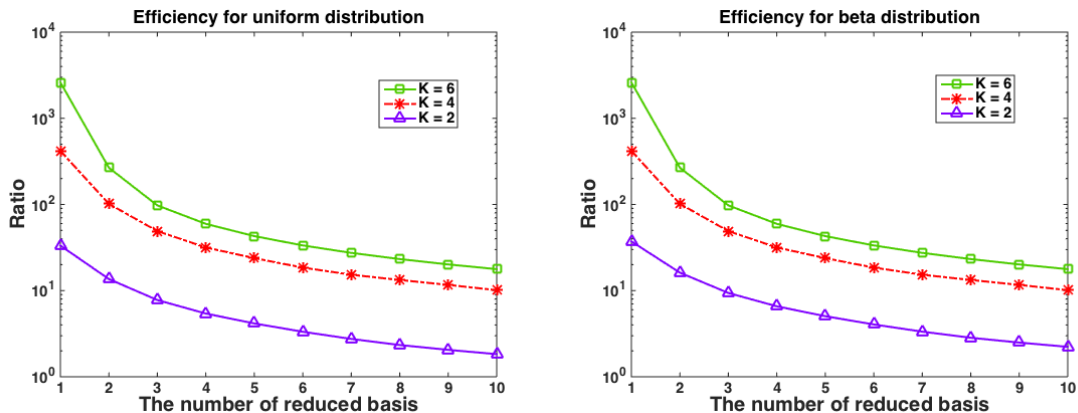

\centering
\includegraphics[width = 0.49 \textwidth]{efficiency_legendre.png}
\includegraphics[width = 0.49 \textwidth]{efficiency_beta.png}
\caption{Efficiency of the hybrid algorithm for uniform distribution (left) and Beta distribution (right).}
\label{fig:efficiency}
\end{figure}

\section{Conclusion}
We propose, analyze, and numerically test a hybridized RBM-gPC algorithm. It is based on a newly designed 
weighted RBM enabling a particular greedy algorithm tailored for any applicable quantity of interest in the context 
of uncertainty quantification. The final algorithm is analyzed to be reliable, and tested to be accurate. 
We observe that the efficiency of the hybrid algorithm increases with respect to the dimension of parameter in the partial 
differential equation. This suggests that the hybrid approach may be useful in alleviating the curse of dimensionality for other problem as well.

\providecommand{\bysame}{\leavevmode\hbox to3em{\hrulefill}\thinspace}
\providecommand{\MR}{\relax\ifhmode\unskip\space\fi MR }
\providecommand{\MRhref}[2]{%
  \href{http://www.ams.org/mathscinet-getitem?mr=#1}{#2}
}
\providecommand{\href}[2]{#2}

\bibliographystyle{siamplain}
\bibliography{hybrid-gpc}

\end{document}